\newcommand{\Z}{\mathbb{Z}}
\newcommand{\R}{\mathbb{R}}
\newcommand{\N}{\mathbb{N}}
\newcommand{\LM}{\mathcal{L}}
\newcommand{\dt}{\delta}
\newcommand{\Dt}{\Delta}
\newcommand{\D}{\mathcal{D}}
\newcommand{\e}{\epsilon}
\newcommand{\E}{\mathcal{E}}
\newcommand{\V}{\mathcal{V}}
\newcommand{\Gr}{\mathcal{G}}
\newcommand{\W}{\mathcal{W}}
\newcommand{\s}{\sigma}
\newcommand{\Sig}{\Sigma}
\newtheorem{theorem}{Theorem}[section]
\newtheorem{definition}{Definition}
\newtheorem{proposition}[theorem]{Proposition}
\newtheorem{example}[theorem]{Example}
\newtheorem{remark}{Remark}[section]
\newtheorem{algo}[theorem]{Algorithm}
\begin{document}
%
\title{Observability of Boolean control networks: A unified approach based
on finite automata}
%
%

\author{Kuize~Zhang, {\it Member, IEEE}, Lijun~Zhang 
\thanks{K. Zhang is with College of Automation, Harbin Engineering University, Harbin, 150001, PR China
(e-mail: zkz0017@163.com), and Institute of Systems Science, Chinese Academy of Sciences, Beijing 100190, PR China.
L. Zhang is with
School of Marine Science and Technology, Northwestern Polytechnical University, Xi'an, 710072, PR China (e-mail: zhanglj7385@nwpu.edu.cn)
and College of Automation, Harbin Engineering University, Harbin, 150001, PR China.

The original version of this paper was presented at the 33rd Chinese Control
Conference, July 28--30, 2014, Nanjing, China.
}
\thanks{This work was supported by
the Fundamental Research Funds for the Central Universities (HEUCFX41501),
National Natural Science Foundation of China (No. 61573288), Program for New Century Excellent Talents in University of Ministry of Education of China and Basic Research
Foundation of Northwestern Polytechnical University (No.
JC201230).
}
\thanks{Manuscript received xxxx xx, xxxx; revised xxxx xx, xxxx.}}

%
%

\markboth{Journal of \LaTeX\ Class Files,~Vol.~x, No.~x, xxxx~xxxx}%
{Shell \MakeLowercase{\textit{et al.}}: Bare Demo of IEEEtran.cls for Journals}
%



\maketitle

\begin{abstract}
The problem on how to determine the  observability
of Boolean control networks (BCNs)  has been open for five years already.
In this paper, we propose a unified approach
to determine all the four types of observability
of BCNs in the literature. We define the concept of weighted pair graphs for BCNs.
In the sense of each observability,
we use the so-called weighted pair graph to transform a BCN to a finite automaton, and then
we use the automaton to determine observability.
In particular, the two types of observability
that rely on initial states and inputs in the literature
are determined.
Finally, we show that no pairs of the four types of observability are equivalent,
which reveals the essence of nonlinearity of BCNs.
\end{abstract}

\begin{IEEEkeywords}
Boolean control network, observability, weighted pair graph,
finite automaton, formal language,
semi-tensor product of matrices
\end{IEEEkeywords}

%
\IEEEpeerreviewmaketitle

\section{Introduction}

In 2007, Akutsu et al. \cite{Akutsu(2007)} propose the concept of {\it controllability} of
{\it Boolean control networks} (BCNs), prove that determining the controllability of BCNs is
{\bf NP}-hard\footnote{That is,
there exists no polynomial time algorithm for determining the controllability of BCNs
unless  {\bf P=NP}.},
and point out  that ``One of the major goals of
systems biology is to develop a control theory for complex biological systems''.
Since then, the study on control-theoretic problems in the areas of  Boolean
 networks (initiated by Kauffman \cite{Kauffman1969RandomBN} in 1969
to  describe
genetic regulatory networks) and Boolean control
networks (initiated in \cite{Ideker(2001)} in 2001) has drawn vast attention (cf.
\cite{Cheng2009bn_ControlObserva,Cheng2011IdentificationBCN,Zhao2010InputStateIncidenceMatrix,Cheng(2011book),Fornasini2013ObservabilityReconstructibilityofBCN,Laschov2013ObservabilityofBN:GraphApproach,Zhang2013ControlObservaBCNTVD,Li2013ObservabilityConditionsofBCN,Zhang2015Invertibility:BCN} etc.).
Controllability and {\it observability} are basic control-theoretic problems.
In 2009, Cheng et al. \cite{Cheng2009bn_ControlObserva}
construct a control-theoretic
framework for BCNs by using a new tool, called the {\it semi-tensor product} (STP) of matrices
proposed in \cite{Cheng2001STP} in 2001,
and give equivalent conditions for controllability
of BCNs and observability of controllable BCNs. Since then,  to the best of our knowledge,
how to determine this observability  has been open.
This type of observability means that
every initial state can be determined by an input sequence. 
Later on, important results on other types of observability
of BCNs came up.
Until now, there are four types of
observability. 
Another observability, proposed  in
\cite{Zhao2010InputStateIncidenceMatrix} in 2010, stands for that
for every two distinct initial states,
there exists an input sequence which can distinguish them.
There is a sufficient but not necessary condition in \cite{Zhao2010InputStateIncidenceMatrix}.
However, there is no equivalent condition in \cite{Zhao2010InputStateIncidenceMatrix}.
This observability is determined in \cite{Li2015ControlObservaBCN} in 2015
based on an algebraic method.
A third observability stating that there is an input sequence
that determines the initial state, is proposed in \cite{Cheng2011IdentificationBCN}
to study the identifiability problem of BCNs in 2011.
It is proved that determining this observability is {\bf NP}-hard in
\cite{Laschov2013ObservabilityofBN:GraphApproach} in 2013.
Nevertheless, one way is proposed in  \cite{Li2013ObservabilityConditionsofBCN}
to determine this observability in 2013.
A fourth observability is determined in
\cite{Fornasini2013ObservabilityReconstructibilityofBCN,Xu2013ObserverFA_STP}\footnote{Note
that  the types of observability studied in \cite{Fornasini2013ObservabilityReconstructibilityofBCN,Xu2013ObserverFA_STP}
are the same.} in 2013, which is
essentially the observability of linear control systems,
i.e., every sufficiently long input sequence can
determine the initial state.

Like nonlinear systems,
BCNs are polynomial systems over ${\mathbb F}_2$, the Galois field of two elements \cite{Li2015ControlObservaBCN}.
This explains why
the observability proposed in \cite{Cheng2009bn_ControlObserva,Zhao2010InputStateIncidenceMatrix}
that rely on initial states and inputs are important for BCNs.
The methods of determining the last two types of observability
are not suitable for the first two, mainly because they
are based on the independence of
initial states and/or inputs. Besides, it is not known whether the method for
the second type used in
\cite{Li2015ControlObservaBCN} is suitable for the other three types now.
In this paper, we  propose
a unified method based on {\it finite automata}
to determine all the four types of observability regardless of dependence.
To this end, we firstly define
{\it weighted pair graphs} for BCNs, which consist of pairs of states of BCNs producing
the same outputs, and transitions between the pairs.
Secondly, we use the weighed pair graph to transform a BCN to a deterministic finite
automaton. Finally, we use the automaton to determine observability.

The remainder of this paper is organized as follows. Section \ref{sec2}
introduces necessary preliminaries about
STP, BCNs with their algebraic forms, {\it formal languages} and finite automata.
Section \ref{sec3} presents the algorithms to determine all the four types of observability.
Section \ref{sec4} shows the pairwise nonequivalence of  
the four types of observability of BCNs.
Section \ref{sec6} ends up with some remarks and challenging open problems.

\section{Preliminaries}\label{sec2}

\subsection{The semi-tensor product of matrices}

We first introduce some related notations in STP.

\begin{itemize}
	\item $2^A$: the power set of  set $A$
  \item $\Z_+$: the set of  positive integers
  \item $\N$: the set of  natural numbers
  \item $\D$: the set
  $\{0,1\}$
  \item $\delta_n^i$: the $i$-th column of the identity matrix $I_n$
  \item $\Delta_n$: the set $\{\delta_n^1,\dots,\delta_n^n\}$
	  ($\Dt:=\Dt_2$)
  \item $\delta_n[i_1,\dots,i_s]$: the {\it logical matrix}
	  $[\delta_n^{i_1},\dots,\delta_n^{i_s}]$ ($i_1,\dots,i_s\in\{1,2,\dots, n\}$)
	  (for the concept of logical matrices, we refer the reader to  \cite{Cheng(2011book)}.)
  \item $\LM_{n\times s}$: the set of
  $n\times s$ logical matrices, i.e., $\{\delta_n[i_1,\dots,i_s]|i_1,\dots,i_s\in\{1,2,\dots,n\}\}$
  \item $[1,N]$: the first $N$ positive integers
  \item $|A|$: the cardinality of set $A$
\end{itemize}

\begin{definition}\cite{Cheng(2011book)}
	Let $A\in \R_{m\times n}$, $B\in \R
	_{p\times q}$, and $\alpha=\mbox{lcm}
	(n,p)$ be the least common multiple of $n$ and $p$. The STP of $A$
	and $B$ is defined as
		$A\ltimes B = (A\otimes I_{\frac{\alpha}{n}})(B\otimes I_{\frac
		{\alpha}{p}})$,
	where $\otimes$ denotes the Kronecker product.
\end{definition}

From this definition, it is easy to see that the conventional
product of matrices is a particular case of STP.
Since STP keeps most properties of the conventional product \cite{Cheng(2011book)},
e.g., the associative law, the distributive law,
etc.,
we usually omit the symbol ``$\ltimes$'' hereinafter.

\subsection{Boolean control networks and their algebraic forms}

In this paper, we investigate the following BCN
with $n$ state nodes, $m$ input nodes and $q$ output nodes:
\begin{equation}\label{BCN1}
\begin{split}
 &x (t + 1) = f (u (t),x (t)), \\
 &y(t)=h(x (t)),\\
 \end{split}
\end{equation}
where $x\in\D^n$; $u\in\D^m$; $y\in\D^q$;
$t=0,1,\dots$; $f:\D^{n+m}\to\D^n$ and $h:\D^n\to\D^q$ are logical
functions.

Using the STP of
matrices, (\ref{BCN1}) can be  equivalently represented in
the following algebraic form \cite{Cheng2009bn_ControlObserva}
\begin{equation}\label{BCN2}
\begin{split}
 &x(t + 1) = Lu(t)x(t),\\
 &y(t) = Hx(t),
\end{split}
\end{equation}
where $x\in\Delta_{N}$, $u\in\Delta_{M}$ and $y\in\Dt_{Q}$ denote
states, inputs and outputs, respectively;
$t=0,1,\dots$; $L\in\LM_{N\times (NM)}$; $H\in\LM_{Q\times N}$;
hereinafter, $N:=2^n$, $M:=2^m$ and $Q:=2^q$.

For more details on properties of STP,
and how to transform a BCN into its equivalent algebraic form,
we  refer the reader to \cite{Cheng2009bn_ControlObserva}.

\subsection{Formal languages and finite automata}

The theories of formal languages and finite automata are
among the mathematical foundations of theoretical computer
science \cite{Kari2013LectureNoteonAFL}.
Let $\Sig$ be a finite nonempty set (called {\it alphabet}).
We use $\Sig^*$ to denote the set of all finite sequences (called {\it words}) of
elements (called {\it letters}) of $\Sig$.
The empty word is denoted by $\epsilon$.
$|u|$ denotes the length of word $u$. For example, $|abc|=3$ for
the alphabet $\{a,b,c\}$, $|\epsilon|=0$. The set of all words of length $p$
is denoted by $\Sig^p$. Notice that $\Sig^{0}=\{\epsilon\}$.
Then $\Sig^{*}=\cup_{p=0}^{\infty}\Sig^{p}$.
A formal language (or language for short) is a subset of $\Sig^*$.

A deterministic finite automaton (DFA) is defined as
5-tuple $A=(S,\Sig,\s,s_0,F)$, where $S$ denotes the finite state set,
$\Sig$  the finite alphabet,
$s_0\in S$  the initial state, $F\subset S$  the final state set,
and  $\s:S\times\Sig\to S$  the transition partial function,
i.e., a function defined on a fixed subset of $S\times\Sig$,
which can naturally be
extended to $\s:S\times \Sig^*\to S$.
We call a
DFA {\it complete} if $\s$ is a function from $S\times\Sig^*$ to $S$.
A language $L$ over alphabet $\Sig$ is called {\it regular}, if it is {\it recognized} by
a DFA $A=(S,\Sig,\s,s_0,F)$, i.e., $L=\{w\in\Sig^*|\s(s_0,w)\in F\}$.
A word $u\in\Sig^*$ such that $\s(s_0,u)\in F$ is called {\it accepted} by DFA $A$.
A DFA accepts the empty word $\epsilon$ iff its initial state is final.

In order to represent a DFA,
we introduce the transition graph of DFA
$A=(S,\Sig,\s,s_0,F)$.
Let $V,E$ and $W$ be the vertex set, the edge set and the weight function of
a weighted directed graph $G=(V,E,W)$.
$G$ is called the transition graph of DFA $A$,
if $V=S$, $E=\{(s_i,s_j)\in V\times V|\text{there is }a\in\Sig\text{ such that }
\s(s_i,a)=s_j\}\subset V\times V$, and
$W:E\to 2^\Sig$, $(s_i,s_j)\mapsto \{a\in\Sig|\s(s_i,a)=s_j\}$. 

In the transition graph of a DFA, we add a ``start'' input arrow to the vertex of
the initial state, and use double circles  to denote
final states. We omit the curly bracket ``$\{\}$''
in the weights of edges. See Fig. \ref{fig4:observability} for an example.

Now we give a proposition on finite automata that will be used in
the main results.

\begin{proposition}\label{prop6_observability}
	Given a DFA $A=(S,\Sig,\s,s_0,F)$. Assume that $F=S$ and for each
	$s\in S$, there is a word $u\in\Sig^*$ such that $\s(s_0,u)=s$.
	Then $L(A)=\Sig^*$ iff $A$ is complete.
\end{proposition}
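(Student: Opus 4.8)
The plan is to prove the two implications separately. I would dispatch the backward direction directly from the hypothesis $F=S$, and reserve a contrapositive argument for the forward direction, where the reachability hypothesis does the real work.

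For the direction ``$A$ complete $\Rightarrow L(A)=\Sig^*$'', I would argue that completeness makes $\s(s_0,w)$ defined for every $w\in\Sig^*$, and since $F=S$ the reached state $\s(s_0,w)$ automatically lies in $F$; hence every word is accepted and $L(A)=\Sig^*$. Notably, this step uses only $F=S$ together with completeness, and does not invoke the reachability assumption at all.

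For the converse, ``$L(A)=\Sig^* \Rightarrow A$ complete'', I would argue by contraposition. If $A$ is not complete, then (since totality on $\Sig^*$ is equivalent to totality on single letters) the partial function $\s$ is undefined at some pair $(s,a)\in S\times\Sig$. Here I would invoke the reachability hypothesis to choose a word $u$ with $\s(s_0,u)=s$. The extended transition then satisfies $\s(s_0,ua)=\s(\s(s_0,u),a)=\s(s,a)$, which is undefined, so the word $ua$ reaches no state and fails the acceptance condition $\s(s_0,ua)\in F$; thus $ua\notin L(A)$, whence $L(A)\neq\Sig^*$.

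The main obstacle, though mild, is recognizing precisely where the reachability hypothesis is indispensable: an undefined transition at a state not reachable from $s_0$ would leave $L(A)$ untouched, so completeness could fail without $L(A)=\Sig^*$ failing. Reachability guarantees that every gap in $\s$ surfaces as a concretely rejected word, which is exactly what closes the contrapositive. Combining the two implications then completes the proof.
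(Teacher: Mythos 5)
Your proof is correct and follows essentially the same route as the paper's: the forward direction from $F=S$ plus completeness alone, and the converse by contraposition, using reachability to promote an undefined transition $(s,a)$ to a concretely rejected word $ua$. Your explicit remark about where reachability is indispensable is a nice touch, but the underlying argument is the same.
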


\begin{proof}
	``if'': If $A$ is complete and $F=S$, then $\e\in L(A)$ and
	for any nonempty word $w\in\Sig^*$, $\s(s_0,w)\in F$, i.e., $w\in
	L(A)$. Hence $L(A)=\Sig^*$.

	``only if'': Assume that $F=S$ and $A$ is not complete.
	Choose an $s\in S$ such that $\s$ is not well defined at $(s,a)$
	for some $a\in\Sig$.
	Choose  word $w\in \Sig^*$ such that $\s(s_0,w)=s$, 
	then
	word $wa\notin L(A)$, for $A$ is deterministic.
	That is, $L(A)\ne \Sig^*$.
\end{proof}

\section{Determining the observability of BCNs}\label{sec3}

\subsection{Weighted pair graph}

In this subsection, we define a weighted directed graph for BCN \eqref{BCN2},
named weighted pair graph.
Based on the weighted pair graph, in the following subsections,
we  construct a DFA in the sense of each observability, and then
use the  obtained DFA and Proposition \ref{prop6_observability}
to determine  observability.

\begin{definition}\label{pairgraph}
	Consider BCN \eqref{BCN2}.
	Let $\V,\E$ and $\W$ be the vertex set, the edge set and the weight function of
	a weighted directed graph $\Gr=(\V,\E,\W)$. $\Gr$ is
	called the weighted pair graph of the BCN, if
	$\V = \{(x,x')\in\Dt_N\times\Dt_N|Hx=Hx'\}$\footnote{Here
	$(x,x')$ is an unordered pair, i.e., $(x,x')=(x',x)$.},
	$\E=\{((x_1,x_1'),(x_2,x_2'))\in\V\times\V|
	\text{there exists }u\in\Dt_M\text{ such that }Lux_1=x_2\text{ and }Lux_1'=x_2',
	\text{ or, }Lux_1=x_2'\text{ and }Lux_1'=x_2\}\subset \V\times\V$, and
	$\W:\E\to 2^{\Dt_M}$, $((x_1,x_1'),(x_2,x_2'))\mapsto
	\{u\in\Dt_M|Lu_1x_1=x_2\text{ and }Lu_1x_1'=x_2'\text{, or, }
	Lux_1=x_2'\text{ and }Lux_1'=x_2\}$. 
\end{definition}

Intuitively, there is an edge from a vertex $v$ to another one $v'$, iff there is
an input $u$ driving one state in $v$ to one state in $v'$ and driving the other
state in $v$ to the other state in $v'$.
Similar to the transition graph of a DFA, we omit the curly
bracket ``$\{\}$'' in the weights of edges.
Hereinafter, we call each vertex $(x,x)\in\Dt_N\times\Dt_N$ a diagonal
vertex.

From Definition \ref{pairgraph}, the weighted pair graph
consists of every state pair producing the same output.
In  fact, to test whether a BCN is observable,
is to test whether these states can be distinguished by
input sequences.

Let $(\V,\E,\W)$ be a weighted pair graph. For  a subset $V$ of $\V$, the subgraph
generated by $V$ is the graph $(V,\E\cap (V\times V),\W|_{\E\cap (V\times V)})$,
where $\W|_{\E\cap (V\times V)}$ is the restriction of $\W$ to $\E\cap (V\times V)$.

The weighted pair graph of the following BCN \eqref{eqn2_observability} is  depicted in
Fig. \ref{fig3:observability}.

	\begin{equation}
		\begin{split}
			x(t+1) &= \dt_4[1,1,2,1,2,4,1,1]x(t)u(t),\\
			y(t) &= \dt_2[1,2,2,2]x(t),
		\end{split}
		\label{eqn2_observability}
	\end{equation}
	where $t\in\N$, $x\in\Dt_4$, $y,u\in\Dt$.

\begin{figure}
        \centering
\begin{tikzpicture}[>=stealth',shorten >=1pt,auto,node distance=1.5 cm, scale = 0.8, transform shape,
	->,>=stealth,inner sep=2pt,state/.style={shape=circle,draw,top color=red!10,bottom color=blue!30},
	point/.style={circle,inner sep=0pt,minimum size=2pt,fill=},
	skip loop/.style={to path={-- ++(0,#1) -| (\tikztotarget)}}]
	\node[state] (11)                                 {$11$};
	\node[state] (22) [right of = 11]                    {$22$};
	\node[state] (42) [left of = 11]                 {$24$};
	\node[state] (32) [right of = 22]                 {$23$};
	\node[state] (44) [below of = 11]                 {$44$};
	\node[state] (33) [below of = 22]                 {$33$};
	\node[state] (34) [left of = 44]                 {$34$};

	\path [->] (11) edge [loop above] node {$1,2$} (11)
		   (22) edge [loop above] node {$1$} (22)
		   (22) edge node {$2$} (11)
		   (42) edge node {$2$} (11)
		   (32) edge node {$1$} (22)
		   (44) edge node {$1,2$} (11)
		   (33) edge node {$1$} (22)
		   (33) edge node {$2$} (44)
		   ;
        \end{tikzpicture}
	\caption{The weighted pair graph of BCN \eqref{eqn2_observability},
	where the number $ij$ in each circle denotes the state pair $(\dt_4^i,
	\dt_4^j)$, and the weight $k_1,k_2,\dots$ beside each edge denotes the
	weight $\{\dt_2^{k_1},\dt_2^{k_2},\dots\}$ of the edge.}
	\label{fig3:observability}
\end{figure}

\subsection{Notations}

\begin{figure}
        \centering
        \begin{tikzpicture}[->,>=stealth,node distance=1.0cm]
          \tikzstyle{state}=[shape=rectangle,fill=blue!80,draw=none,text=white,inner sep=2pt]
          \tikzstyle{input}=[shape=rectangle,fill=red!80,draw=none,text=white,inner sep=2pt]
          \tikzstyle{output}=[shape=rectangle,fill=red!50!blue,draw=none,text=white,inner sep=2pt]
          \tikzstyle{disturbance}=[shape=rectangle,fill=red!20!blue,draw=none,text=white,inner sep=2pt]
	  \tikzstyle{time}=[inner sep=0pt,minimum size=5mm]
	  \node [state] (x0)              {$x_0$};
          \node [state] (x1) [right of=x0] {$x_1$};
	  \node [state] (x2) [right of=x1] {$x_2$};
	  \node [input] (u0) [above of=x1] {$u_0$};
	  \node [input] (u1) [right of=u0] {$u_1$};
	  \node [time] (x3) [right of=x2] {$\cdots$};
	  \node [time] (u2) [above of=x3] {$\cdots$};
	  \node [state] (x4) [right of=x3] {$x_p$};
	  \node [time] (u2) [above of=x3] {$\cdots$};
	  \node [input] (u3) [above of=x4] {$u_{p-1}$};
	  \node [time] (x5) [right of=x4] {$\cdots$};
	  \node [time] (u4) [above of=x5] {$\cdots$};
	  \node [output] (y0) [below of=x0] {$y_0$};
	  \node [output] (y1) [below of=x1] {$y_1$};
	  \node [output] (y2) [below of=x2] {$y_2$};
	  \node [output] (y4) [below of=x4] {$y_p$};
	  \node [time] (y3) [below of=x3] {$\cdots$};
	  \node [time] (y5) [below of=x5] {$\cdots$};
          \path (x0) edge (x1)
	        (x1) edge (x2)
		(u0) edge (x1)
		(u1) edge (x2)
		(x2) edge (x3)
		(x3) edge (x4)
		(u3) edge (x4)
		(x4) edge (x5)
		(x0) edge (y0)
		(x1) edge (y1)
		(x2) edge (y2)
		(x4) edge (y4);
        \end{tikzpicture}
	\caption{The input-state-output-time transfer graph of BCN
	\eqref{BCN2}, where
		subscripts stand for time steps,
		$x_0,x_1,\dots$  states, $u_0,u_1,\dots$  inputs,
		$y_1,y_2,\dots$  outputs, and arrows infer dependence.}
    \label{fig:input-state-output-time-graph}
\end{figure}
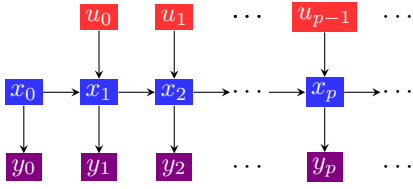

The input-state-output-time transfer graph
of BCN \eqref{BCN2} is drawn in Fig.
\ref{fig:input-state-output-time-graph}.
In order to define  these observability, we
define the following mappings: 

Let $\Dt_M,\Dt_N,\Dt_Q$ be three alphabets.
For all $x_0\in\Dt_N$ and all $p\in\Z_{+}$,
\begin{enumerate}
	\item
		\begin{equation}\label{lx0n}
			\begin{split}
				&L_{x_0}^p:(\Dt_M)^{p}\to(\Dt_N)^{p},u_0
				\dots u_{p-1}\mapsto x_1\dots x_p,\\
			&L_{x_0}^{\N}:(\Dt_M)^{\N}\to(\Dt_N)^{\N},u_0u_1\dots
			\mapsto	x_1x_2\dots.
			\end{split}
		\end{equation}
	\item
		\begin{equation}\label{hx0n}
			\begin{split}
				&(HL)_{x_0}^p:(\Dt_M)^{p}\to(\Dt_Q)^{p},u_0
				\dots u_{p-1}\mapsto y_1\dots y_p,\\
				&(HL)_{x_0}^\N: (\Dt_{M})^{\N}\to(\Dt_{Q})^{\N},
			u_0u_1\dots\mapsto y_1y_2\dots.
			\end{split}
		\end{equation}
\end{enumerate}

For all  $p\in\Z_{+}$,
all $U=u_1\dots u_p\in(\Dt_M)^{p}$,
and all $1\le i\le j\le|U|$, we use $U[i,j]$ to denote the word $u_i\dots u_j$.
In particular, $U[i]$ (or $U(i)$) is short for $U[i,i]$.
Given $U\in(\Dt_M)^*$, $U^{\infty}$ denotes the {\it concatenation} of infinite copies
of $U$, i.e., $UU\dots$. For all input sequences $U=u_0u_1\dots$$\in(\Dt_M)^{\N}$, and
all $0\le i\le j\in\N$, we use $U[i,j]$ to denote the word $u_i\dots u_j$.

\subsection{Determining the observability  in \cite{Cheng2009bn_ControlObserva}}

\begin{definition}[\cite{Cheng2009bn_ControlObserva}]\label{def1_observability}
	BCN \eqref{BCN2} is called observable, if for every initial state $x_0
	\in\Dt_N$, there exists an input sequence such that the initial state
	can be determined by the output sequence.
\end{definition}

Definition \ref{def1_observability} can be expressed equivalently as follows:

\begin{definition}\label{def3_observability}
	BCN \eqref{BCN2} is called observable, if for every initial state $x_0
	\in\Dt_N$, there exists an input sequence $U\in(\Dt_M)^{p}$ for some
	$p\in\Z_{+}$ such
	that for all states $x_0\ne\bar x_0\in\Dt_N$,
	$Hx_0=H\bar x_0$ implies $(HL)_{x_0}^{p}(U)\ne (HL)_{\bar
	x_0}^{p}(U)$.
\end{definition}

In this subsection, the observability of BCN \eqref{BCN2} refers to
 Definition \ref{def3_observability}.

According to Definition \ref{def3_observability},
BCN \eqref{BCN2} is not observable iff there is a state $\dt_N^i$
in a non-diagonal vertex of its weighted pair graph $\Gr=(\V,\E,\W)$
such that
for all $p\in\Z_{+}$, all $U\in(\Dt_M)^p$, there is a state $\dt_N^j$
with  $j\ne i$, $(\dt_N^i,\dt_N^j)\in \V$ and $(HL)_{\dt_N^i}^p(U)=(HL)_{\dt_N^j}^p(U)$.

For fixed $\dt_N^i$, we design an algorithm to construct a DFA
for BCN \eqref{BCN2} according to its weighted pair graph $(\V,\E,\W)$.
The new DFA is denoted by $A_{\dt_N^i}$, and accepts exactly all finite input
sequences that do not determine $\dt_N^i$.
The states of DFA $A_{\dt_N^i}$ are subsets of $\V$.

\begin{algo}\label{alg1_observability}
\begin{enumerate}
	\item\label{item1_obser1}
		Set $\Dt_M$ to be the alphabet of the DFA.
		Set the subset of $\V$ consisting of all the
		non-diagonal vertices of $\V$
		that contain $\dt_N^i$ to be the initial state of the DFA.
		That is, the set $s_0:=\{(\dt_N^k,\dt_N^l)|k,l\in[1,N],
		H\dt_N^k=H\dt_N^l,
		k\ne l, k\text{ or }l=i\}$ is   the initial state
		of the DFA.
	\item\label{item2_obser1}
		For each letter $\dt_M^j$, $j\in[1,M]$, find the value
		for the transition partial
		function of the DFA at $(s_0,\dt_M^j)$. 
		The specific procedure is as follows:
		
		Fix $j\in[1,M]$. Set $s_j:=\{v\in\V|\text{there is }v'\in s_0
		\text{ such that }(v',v)\in\E,\text{ and }\dt_M^j\in\W((v',v))\}$.
		If $s_j\ne\emptyset$, add $s_j$ to the state
		set of the DFA and set $s_j$ to be the value of the transition
		partial function at $(s_0,\dt_M^j)$;
		otherise,  the transition partial function
		is not well defined at
		$(s_0,\dt_M^j)$.

	\item\label{item3_obser1}
		For each new  state $s$ of the DFA found in the previous step,
		and each letter $\dt_M^j$, $j\in[1,M]$, find the value
		for the transition partial
		function at $(s,\dt_M^j)$ according to
		Step \ref{item2_obser1}.
	\item\label{item4_obser1}
		Repeat Step \ref{item3_obser1} until
	    no new state of the DFA occurs. (Since $\V$
		is a finite set, so is its power set,
		this repetition  will stop.)
	\item\label{item5_obser1}
		Set all the states of the DFA to be final states.
\end{enumerate}
\end{algo}

Take BCN \eqref{eqn2_observability} for example. Choose state $\dt_4^2$.
Then the DFA $A_{\dt_4^2}$ generated by Algorithm \ref{alg1_observability}
is as shown in Fig. \ref{fig4:observability}.

\begin{figure}
		\centering
\begin{tikzpicture}[>=stealth',shorten >=1pt,auto,node distance=2.0 cm, scale = 1.0, transform shape,
	->,>=stealth,inner sep=2pt,state/.style={shape=circle,draw,top color=red!10,bottom color=blue!30},
	point/.style={circle,inner sep=0pt,minimum size=2pt,fill=},
	skip loop/.style={to path={-- ++(0,#1) -| (\tikztotarget)}}]
	\node[accepting,initial,state] (2*)  {$23,24$};
	\node[accepting,state] [above right of = 2*] (11) {$11$};
	\node[accepting,state] [below right of = 2*] (22) {$22$};
	\path [->] (2*) edge node {$2$} (11)
		   (2*) edge node {$1$} (22)
		   (22) edge [bend right] node {$2$} (11)
		   (22) edge [loop right] node {$1$} (22)
		   (11) edge [loop right] node {$1,2$} (11)
	;
		\end{tikzpicture}
	\caption{The DFA $A_{\dt_4^2}$ with respect to BCN
	\eqref{eqn2_observability} generated by Algorithm
	\ref{alg1_observability},
	where the number $ij$ in each circle denotes the state pair $(\dt_4^i,
	\dt_4^j)$, and the weight $k$ beside each edge denotes the input $\dt_2^k$.
	}
	\label{fig4:observability}
\end{figure}

Now we give a necessary and sufficient condition for this observability.

\begin{theorem}\label{thm3_observability}
  BCN \eqref{BCN2} is not observable in the sense of
  Definition \ref{def3_observability} iff there is a state
  $\dt_N^i$ in a non-diagonal
  vertex of its weighted pair graph such that
  the DFA $A_{\dt_N^i}$ generated by Algorithm \ref{alg1_observability}
  recognizes language $(\Dt_M)^*$.

\end{theorem}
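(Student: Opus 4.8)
The plan is to show that the DFA $A_{\dt_N^i}$ built by Algorithm \ref{alg1_observability} accepts exactly those finite input sequences that fail to distinguish $\dt_N^i$ from some other admissible initial state, and then to read off the theorem by quantifying over $i$. First I would pin down the semantics of the DFA states. Writing $S_U:=\s(s_0,U)$ for the state reached after reading $U\in(\Dt_M)^*$, I claim by induction on $|U|$ that $\s(s_0,U)$ is defined precisely when the set of vertices $v\in\V$ admitting a path in the weighted pair graph from some vertex of $s_0$ to $v$ whose consecutive edge weights spell out $U$ is nonempty, and that in that case $S_U$ equals this set. The base case $S_\epsilon=s_0$ is immediate, and the inductive step is exactly the rule defining the transition partial function in Step \ref{item2_obser1}: $s_j$ collects all forward images under weight $\dt_M^j$ of the vertices of the current state, and a state is kept (and declared reachable and final) iff it is nonempty. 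Since every state of $A_{\dt_N^i}$ is reachable and final by construction, it follows that $U\in L(A_{\dt_N^i})$ iff $S_U\neq\emptyset$.

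Next I would establish the key equivalence: for every nonempty $U$ of length $p$,
\[
S_U\neq\emptyset \iff \dt_N^i \text{ is not distinguished by } U,
\]
meaning there is $\dt_N^l\neq\dt_N^i$ with $H\dt_N^i=H\dt_N^l$ and $(HL)_{\dt_N^i}^{p}(U)=(HL)_{\dt_N^l}^{p}(U)$, the maps being those of \eqref{hx0n}. The translation rests on the observation that the vertex condition $H\dt_N^k=H\dt_N^l$ of Definition \ref{pairgraph} encodes equality of outputs at time $0$, whereas existence of a weighted edge encodes equality of outputs one step later; iterating, a vertex $(\dt_N^k,\dt_N^l)$ of $s_0$ survives $p$ steps along the weights of $U$ iff the two trajectories issued from $\dt_N^k$ and $\dt_N^l$ produce equal outputs at all times $0,1,\dots,p$. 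For ``$\Leftarrow$'', given such an $\dt_N^l$ the pair $(\dt_N^i,\dt_N^l)$ lies in $s_0$, and the assumed equality $(HL)_{\dt_N^i}^{p}(U)=(HL)_{\dt_N^l}^{p}(U)$ forces every intermediate pair to be again a vertex joined to its successor by the edge of weight $U[t{+}1]$, so the path does not die and $S_U\neq\emptyset$. For ``$\Rightarrow$'', I take $v\in S_U$, lift it to a path from some $(\dt_N^i,\dt_N^l)\in s_0$, and recover $\dt_N^l$ as a witness; here one uses that $s_0$ consists only of non-diagonal vertices (so $l\neq i$) and that, the vertices being unordered pairs, the representatives along the path may be relabeled consistently so that one coordinate follows the trajectory of $\dt_N^i$ and the other that of $\dt_N^l$.

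Finally I would assemble the theorem. Because $\dt_N^i$ lies in a non-diagonal vertex, $s_0\neq\emptyset$, so $s_0$ is a final state and $\epsilon\in L(A_{\dt_N^i})$; hence $L(A_{\dt_N^i})=(\Dt_M)^*$ iff every \emph{nonempty} input lies in the language, which by the two steps above holds iff $\dt_N^i$ is not distinguished by any input sequence, i.e., iff $\dt_N^i$ witnesses non-observability in the sense of Definition \ref{def3_observability}. Quantifying over $i$ yields both implications, once I note that a state lying in no non-diagonal vertex is vacuously distinguished (no other state shares its output at time $0$) and so can never witness non-observability; this justifies restricting attention to non-diagonal $\dt_N^i$.

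I expect the main obstacle to be the ``$\Rightarrow$'' direction of the central equivalence. Since the weighted pair graph is built from unordered pairs, a path realizing a nonempty $S_U$ need not a priori keep its two coordinates attached to the trajectories of a fixed ordered pair, and the two trajectories may coalesce into a diagonal vertex partway along. I would handle both points by the consistent relabeling above, checking that once the outputs agree up to time $p$ the corresponding initial states are genuinely indistinguishable even if their trajectories merge: a diagonal vertex still carries an edge for every input, so the path survives, while the initial pair stays non-diagonal, keeping $l\neq i$. The remaining verifications—the inductive identification of $S_U$ and the bookkeeping of time $0$ versus times $1,\dots,p$ in \eqref{hx0n}—are routine.
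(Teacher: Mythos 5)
Your proposal is correct and takes essentially the same route as the paper: both hinge on the fact that $A_{\dt_N^i}$ accepts exactly the finite input sequences that fail to determine $\dt_N^i$, obtained by translating words accepted by the DFA into paths of the weighted pair graph and hence into pairs of trajectories with equal outputs, and conversely. The paper's proof simply asserts this characterization in its ``if'' direction, whereas you prove it in full (the subset-construction semantics of the DFA states, the lifting of an accepted word to a path, and the consistent relabeling of unordered pairs including possible merging into diagonal vertices), so your write-up is a more detailed rendering of the same argument.
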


\begin{proof}
  ``only if'':
  Assume that BCN \eqref{BCN2} is not observable, then there is
  a state $\dt_N^i$ such that for all $p\in\Z_{+}$, all
  $U\in(\Dt_M)^p$, there is a state $\dt_N^j$ satisfying $i\ne j$,
  $H\dt_N^i=H\dt_N^j$, and $(HL)_{\dt_N^i}^p(U)=(HL)_{\dt_N^j}^p(U)$.
  According to Algorithm \ref{alg1_observability}, $v_0:=(\dt_N^i,\dt_N^j)$
  is in the initial state of DFA $A_{\dt_N^i}$. Denote the weighted pair graph
  of BCN \eqref{BCN2} by $\Gr=(\V,\E,\W)$.
  Then there exist vertices $v_k:=(\dt_N^{i_k},\dt_N^{j_k})\in\V$
  such that
  $U[k]\in \W( (v_{k-1},v_{k}))$, $k=1,\dots,p$.
  That is, for all $p\in\Z_{+}$, each $U$ in $(\Dt_M)^p$ is accepted by
  DFA  $A_{\dt_N^i}$.
  It is obvious that $\e\in L (A_{\dt_N^i})$.
  Then $L (A_{\dt_N^i})=(\Dt_M)^*$.

  ``if'':
  Note that the DFA $A_{\dt_N^i}$ accepts exactly all finite input
  sequences that do not determine $\dt_N^i$. Then $L (A_{\dt_N^i})=(\Dt_M)^*$
  implies that for all $p\in\Z_{+}$, all
  $U\in(\Dt_M)^p$, there is a state $\dt_N^j$ such that $i\ne j$,
  $H\dt_N^i=H\dt_N^j$, and $(HL)_{\dt_N^i}^p(U)=(HL)_{\dt_N^j}^p(U)$.
  That is, BCN \eqref{BCN2} is not observable.
\end{proof}

Proposition \ref{prop6_observability},
Theorem \ref{thm3_observability} and Algorithm
\ref{alg1_observability} directly imply  the following result that
can be used to check whether a given BCN is observable.

\begin{theorem}\label{alg2_observability}
  BCN \eqref{BCN2} is not observable in the sense of
  Definition \ref{def3_observability} iff there is a state
  $\dt_N^i$ in a non-diagonal
  vertex of its weighted pair graph such that
  the DFA $A_{\dt_N^i}$ generated by Algorithm \ref{alg1_observability}
  is complete.
\end{theorem}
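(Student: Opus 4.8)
The plan is to derive this statement from Theorem \ref{thm3_observability} by replacing the condition ``$A_{\dt_N^i}$ recognizes language $(\Dt_M)^*$'' with the condition ``$A_{\dt_N^i}$ is complete.'' Since Theorem \ref{thm3_observability} already characterizes non-observability by the existence of a state $\dt_N^i$ in a non-diagonal vertex for which $L(A_{\dt_N^i})=(\Dt_M)^*$, it suffices to show that for each such DFA $A_{\dt_N^i}$ the equivalence $L(A_{\dt_N^i})=(\Dt_M)^*$ iff $A_{\dt_N^i}$ is complete holds. This is exactly the content of Proposition \ref{prop6_observability}, provided that the two hypotheses of that proposition are met for $A_{\dt_N^i}$.

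Thus the core of the argument is to verify that every DFA $A_{\dt_N^i}$ produced by Algorithm \ref{alg1_observability} satisfies: (i) its final state set equals its full state set, and (ii) each of its states is reachable from the initial state by some input word. Condition (i) is immediate, since Step \ref{item5_obser1} of Algorithm \ref{alg1_observability} declares all states final. For condition (ii), I would argue by induction on the order in which states are introduced during the construction: the initial state $s_0$ is reachable via the empty word $\e$, and any state $s$ added in Steps \ref{item2_obser1}--\ref{item4_obser1} is, by the construction, the value of the transition partial function at some pair $(s',\dt_M^j)$ where $s'$ was already present; concatenating a word reaching $s'$ with the letter $\dt_M^j$ then yields a word reaching $s$. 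Hence every state of $A_{\dt_N^i}$ is reachable from $s_0$.

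With (i) and (ii) established, Proposition \ref{prop6_observability} applies to each $A_{\dt_N^i}$ and gives $L(A_{\dt_N^i})=(\Dt_M)^*$ iff $A_{\dt_N^i}$ is complete. Chaining this equivalence with Theorem \ref{thm3_observability} then delivers the claimed characterization of non-observability. I do not anticipate a serious obstacle, as the statement is essentially a corollary of the two preceding results; the only point demanding care is the reachability argument in (ii), which must appeal precisely to the breadth-first manner in which Algorithm \ref{alg1_observability} adds new states. One must confirm that the algorithm never introduces a state except as the image of an already-constructed state under the transition function, so that no unreachable ``orphan'' state can arise and invalidate the hypothesis of Proposition \ref{prop6_observability}.
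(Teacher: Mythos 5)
Your proposal is correct and matches the paper's own route: the paper states this theorem as a direct consequence of Proposition \ref{prop6_observability}, Theorem \ref{thm3_observability} and Algorithm \ref{alg1_observability}, which is precisely your chaining argument. Your explicit verification of the two hypotheses of Proposition \ref{prop6_observability} (all states final by Step \ref{item5_obser1}, and reachability by induction on the order in which Algorithm \ref{alg1_observability} introduces states) simply fills in details the paper leaves implicit.
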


\begin{example}\label{exam2_observability}
	Check whether BCN \eqref{eqn2_observability} is observable.
	
	According to Theorem \ref{alg2_observability}, we should check
	$\dt_4^2,\dt_4^3,\dt_4^4$ one by one.

	First we check $\dt_4^2$. According to Algorithm
	\ref{alg1_observability}, we calculate DFA $A_{\dt_4^2}$, and
	derive the transition graph of this DFA as shown in Fig.
	\ref{fig4:observability}. This DFA is complete, by
	Theorem \ref{alg2_observability}, BCN \eqref{eqn2_observability} is
	not observable.
\end{example}

\subsection{Determining the observability  in \cite{Zhao2010InputStateIncidenceMatrix}}

\begin{definition}\label{def4_observability}
	BCN \eqref{BCN2} is called observable, if for any distinct states
	$x_0,\bar x_0\in\Dt_N$, there is an input sequence $U\in(\Dt_M)^p$
	for some $p\in\Z_{+}$, such that $Hx_0=H\bar x_0$ implies
	$(HL)_{x_0}^{p}(U)\ne (HL)_{\bar x_0}^{p}(U)$.\footnote{Actually,
	after removing ``$Hx_0=H\bar x_0$ implies'' in Definition \ref{def4_observability},
	Definition \ref{def4_observability} becomes
	the observability studied in \cite{Zhao2010InputStateIncidenceMatrix}.
	In order to make the observability studied in \cite{Zhao2010InputStateIncidenceMatrix}
	exactly the widely accepted one for nonlinear control systems, we modify it in
	Definition \ref{def4_observability}.}
\end{definition}

In this subsection, the observability of BCN \eqref{BCN2} means Definition \ref{def4_observability}.

According to Definition \ref{def4_observability},
BCN \eqref{BCN2} is not observable iff there is a non-diagonal vertex
$(\dt_N^i,\dt_N^j)$ in its weighted pair graph  such that
for all $p\in\Z_{+}$, and $U\in(\Dt_M)^p$,
$(HL)_{\dt_N^i}^p(U)=(HL)_{\dt_N^j}^p(U)$.

For a fixed non-diagonal vertex $(\dt_N^i,\dt_N^j)$,
we design an algorithm to construct a DFA
for BCN \eqref{BCN2} according to its weighted pair graph.
The new DFA is denoted by  $A_{(\dt_N^i,\dt_N^j)}$,
and accepts exactly all finite input sequences that do not
distinguish $\dt_N^i$ and $\dt_N^j$.

\begin{algo}\label{alg3_observability}
	\begin{enumerate}
		\item Set $\Dt_M$ to be the alphabet of the DFA.
		Set vertex $(\dt_N^i,\dt_N^j)$
		to be the initial state of the DFA.
		\item Find each vertex $v$ such that there is a path
			from $(\dt_N^i,\dt_N^j)$ to $v$.
			Keep the subgraph generated by
			$(\dt_N^i,\dt_N^j)$ and those vertices, and remove all vertices and
			edges outside of the subgraph.
		\item Set each remainder vertex to be a final state of the DFA.
	\end{enumerate}
\end{algo}

Again take BCN \eqref{eqn2_observability} as an example.
 The DFA of each non-diagonal vertex of the weighted pair
graph generated by Algorithm \ref{alg3_observability}
is shown in Fig. \ref{fig5:observability}.

\begin{figure}
		\centering
\begin{tikzpicture}[>=stealth',shorten >=1pt,auto,node distance=1.1 cm, scale = 0.9, transform shape,
	->,>=stealth,inner sep=2pt,state/.style={shape=circle,draw,top color=red!10,bottom color=blue!30},
	point/.style={circle,inner sep=0pt,minimum size=2pt,fill=},
	skip loop/.style={to path={-- ++(0,#1) -| (\tikztotarget)}}]
	\tikzstyle{time}=[inner sep=0pt,minimum size=5mm]

	\node[accepting,initial,state] (241)  {$24$};
	\node[accepting, state] (111) [right of = 241] {$11$};
	\path [->] (241) edge node {$2$} (111)
	(111) edge [loop right] node {$1,2$} (111)
	;

	\node[time] (1) [right of = 111] {};
	\node[time] (2) [right of = 1] {};

	\node[accepting,initial,state] (232) [right of = 2] {$23$};
	\node[accepting, state] (222) [right of = 232] {$22$};
	\node[accepting, state] (112) [right of = 222] {$11$};
	\path [->] (232) edge node {$1$} (222)
	(222) edge [loop below] node {$1$} (222)
	(222) edge node {$2$} (112)
	(112) edge [loop right] node {$1,2$} (112)
	;

	\node[accepting,initial,state] (345) [below of = 241] {$34$};

		\end{tikzpicture}
	\caption{The DFA of each non-diagonal vertex of the weighted pair
	graph of BCN \eqref{eqn2_observability}
	generated by Algorithm \ref{alg3_observability},
	where the number $ij$ in each circle denotes the state pair $(\dt_4^i,
	\dt_4^j)$, and the weight $k$ beside each edge denotes the input $\dt_2^k$.
	}
	\label{fig5:observability}
\end{figure}

The following is a necessary and sufficient condition for this observability.

\begin{theorem}\label{thm4_observability}
  BCN \eqref{BCN2} is not observable in the sense of
  Definition \ref{def4_observability} iff there is a non-diagonal vertex
  $(\dt_N^i,\dt_N^j)$ in its weighted pair graph such that
  the DFA $A_{(\dt_N^i,\dt_N^j)}$ generated by Algorithm
  \ref{alg3_observability}
  recognizes language $(\Dt_M)^*$.

\end{theorem}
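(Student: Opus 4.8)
The plan is to mirror the structure of the proof of Theorem \ref{thm3_observability}, exploiting the fact that DFA $A_{(\dt_N^i,\dt_N^j)}$ from Algorithm \ref{alg3_observability} is, by construction, the acceptor of exactly those finite input sequences that fail to distinguish the pair $(\dt_N^i,\dt_N^j)$. The key translation I would establish first is the following reading of the weighted pair graph: an input sequence $U=u_0\dots u_{p-1}\in(\Dt_M)^p$ satisfies $(HL)_{\dt_N^i}^p(U)=(HL)_{\dt_N^j}^p(U)$ if and only if $U$ labels a length-$p$ walk in the weighted pair graph starting at the vertex $(\dt_N^i,\dt_N^j)$. This is because $Hx=Hx'$ is exactly the condition defining vertices of $\Gr$, and each edge weight records precisely those inputs $u$ that simultaneously drive the two coordinates of one vertex to the two coordinates of the next via $Lu$; chaining these edges over $p$ steps forces the two output trajectories to agree at every time step, and conversely agreement of the outputs keeps both successor states inside a common vertex of $\V$ at each step.

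For the ``only if'' direction I would assume BCN \eqref{BCN2} is not observable in the sense of Definition \ref{def4_observability}. By the characterization stated just before Algorithm \ref{alg3_observability}, there is a non-diagonal vertex $(\dt_N^i,\dt_N^j)$ such that for every $p\in\Z_+$ and every $U\in(\Dt_M)^p$ one has $(HL)_{\dt_N^i}^p(U)=(HL)_{\dt_N^j}^p(U)$. By the walk-labelling equivalence above, every such $U$ labels a walk from $(\dt_N^i,\dt_N^j)$ in $\Gr$; every vertex visited is reachable from $(\dt_N^i,\dt_N^j)$ and hence is retained by Algorithm \ref{alg3_observability} and made a final state. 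Therefore every $U\in(\Dt_M)^p$ is accepted, and since the initial state is final we also have $\e\in L(A_{(\dt_N^i,\dt_N^j)})$, giving $L(A_{(\dt_N^i,\dt_N^j)})=(\Dt_M)^*$.

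For the ``if'' direction I would argue contrapositively in spirit, using the defining property of the DFA: since $A_{(\dt_N^i,\dt_N^j)}$ accepts exactly the finite input sequences that do not distinguish $\dt_N^i$ and $\dt_N^j$, the hypothesis $L(A_{(\dt_N^i,\dt_N^j)})=(\Dt_M)^*$ says that \emph{no} finite input sequence distinguishes this pair, which is precisely the failure of Definition \ref{def4_observability}. The main subtlety to pin down carefully — the one place I expect to spend the most care — is the walk-labelling claim in the quantifier-sensitive setting here: unlike Theorem \ref{thm3_observability}, where the ``bad'' partner $\dt_N^j$ was allowed to vary with $U$, Definition \ref{def4_observability} fixes a single pair $(\dt_N^i,\dt_N^j)$, so I must verify that the edge set $\E$ and weight function $\W$ of Definition \ref{pairgraph} correctly track the \emph{ordered} propagation of both coordinates simultaneously (the unordered-pair convention notwithstanding), ensuring that a labelled walk genuinely certifies coordinate-wise output agreement rather than merely membership of successors in $\V$. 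Once that bookkeeping is confirmed, the equivalence between completeness/acceptance of $(\Dt_M)^*$ and non-observability follows exactly as in the previous theorem, and an application of Proposition \ref{prop6_observability} would then yield the analogous algorithmic (completeness) reformulation.
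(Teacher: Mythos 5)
Your proposal is correct and follows essentially the same route as the paper's proof: the ``only if'' direction runs the fixed pair $(\dt_N^i,\dt_N^j)$ through the weighted pair graph to show every finite word (including $\e$) is accepted, and the ``if'' direction falls out of the fact that $A_{(\dt_N^i,\dt_N^j)}$ accepts exactly the input sequences failing to distinguish the pair, which the paper simply declares obvious from Definition \ref{def4_observability}. The only difference is one of detail: you make explicit the walk-labelling equivalence (including the unordered-pair bookkeeping) that the paper leaves implicit in its construction of $A_{(\dt_N^i,\dt_N^j)}$, and your handling of it is sound since each edge forces both coordinates to propagate simultaneously under the same input while every vertex enforces output agreement.
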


\begin{proof}
  ``only if'':
  Assume that BCN \eqref{BCN2} is not observable, then there is
  a non-diagonal vertex $(\dt_N^i,\dt_N^j)$ in the weighted pair graph of
  BCN \eqref{BCN2} such that  for all $p\in\Z_{+}$, all $U\in(\Dt_M)^p$,
  $(HL)_{\dt_N^i}^p(U)=(HL)_{\dt_N^j}^p(U)$.
  Then for all $p\in\Z_{+}$,  each  $U$ in $(\Dt_M)^p$ is accepted by DFA
  $A_{(\dt_N^i,\dt_N^j)}$.
  It is obvious that $\e\in L (A_{(\dt_N^i,\dt_N^j)})$.
  Then $L (A_{(\dt_N^i,\dt_N^j)})=(\Dt_M)^*$.

  ``if'':
  Obvious by  Definition \ref{def4_observability}.
\end{proof}

From Proposition \ref{prop6_observability},
Theorem \ref{thm4_observability} and Algorithm
\ref{alg3_observability}, the following result which follows
can be used to determine whether a given
BCN is observable.

\begin{theorem}\label{alg4_observability}
  BCN \eqref{BCN2} is not observable in the sense of
  Definition \ref{def4_observability} iff there is a non-diagonal vertex
  $(\dt_N^i,\dt_N^j)$ in its weighted pair graph such that
  the DFA $A_{(\dt_N^i,\dt_N^j)}$ generated by Algorithm
  \ref{alg3_observability} is complete.
\end{theorem}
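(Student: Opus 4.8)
The plan is to obtain the statement as an immediate combination of Theorem \ref{thm4_observability} with Proposition \ref{prop6_observability}; the only genuine work is checking that the DFA $A_{(\dt_N^i,\dt_N^j)}$ produced by Algorithm \ref{alg3_observability} satisfies the two standing hypotheses of the proposition, after which the equivalence follows by substitution.

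First I would invoke Theorem \ref{thm4_observability}, which states that BCN \eqref{BCN2} fails to be observable in the sense of Definition \ref{def4_observability} exactly when some non-diagonal vertex $(\dt_N^i,\dt_N^j)$ of the weighted pair graph yields a DFA with $L(A_{(\dt_N^i,\dt_N^j)})=(\Dt_M)^*$. This reduces the present theorem to showing, uniformly over all non-diagonal vertices, that the language condition $L(A_{(\dt_N^i,\dt_N^j)})=(\Dt_M)^*$ may be replaced by completeness of $A_{(\dt_N^i,\dt_N^j)}$.

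Next I would verify the hypotheses of Proposition \ref{prop6_observability} for $A=A_{(\dt_N^i,\dt_N^j)}=(S,\Dt_M,\s,s_0,F)$. The condition $F=S$ is immediate from Step 3 of Algorithm \ref{alg3_observability}, which declares every surviving vertex to be final. Reachability of each $s\in S$ from the initial state $s_0=(\dt_N^i,\dt_N^j)$ is precisely the defining feature of Step 2, which retains only those vertices lying on some path out of $(\dt_N^i,\dt_N^j)$; hence for each $s$ there is a word $u\in(\Dt_M)^*$ with $\s(s_0,u)=s$. I would also record that $A$ is genuinely deterministic: in the weighted pair graph a vertex $(x_1,x_1')$ together with an input $u$ determines the successor unordered pair $(Lux_1,Lux_1')$ uniquely, so for each state and letter there is at most one transition, and moreover every successor of a retained vertex is again retained, so no transition is lost by passing to the subgraph. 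Thus both hypotheses of Proposition \ref{prop6_observability} hold.

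With the hypotheses in hand, Proposition \ref{prop6_observability} gives, for each non-diagonal vertex, the equivalence $L(A_{(\dt_N^i,\dt_N^j)})=(\Dt_M)^*$ iff $A_{(\dt_N^i,\dt_N^j)}$ is complete; substituting this into the characterization of Theorem \ref{thm4_observability} yields exactly the claimed statement. I expect no real obstacle here, since the argument is a two-step chaining of an already-proved equivalence with the proposition. The only point deserving care is the bookkeeping that establishes $F=S$, reachability, and determinism directly from the three steps of Algorithm \ref{alg3_observability} — all of which hold by inspection of the construction.
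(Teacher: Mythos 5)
Your proposal is correct and follows essentially the same route as the paper, which likewise obtains the theorem as an immediate consequence of chaining Theorem \ref{thm4_observability} with Proposition \ref{prop6_observability} via Algorithm \ref{alg3_observability}. In fact, your explicit verification of the proposition's hypotheses ($F=S$ from Step 3, reachability from Step 2, and determinism of the unordered-pair transitions) fills in bookkeeping that the paper leaves implicit.
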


\begin{example}\label{exam3_observability}
	Check whether BCN \eqref{eqn2_observability} is observable.	
	According to Theorem \ref{alg4_observability}, one should check
	$(\dt_4^2,\dt_4^3),(\dt_4^2,\dt_4^4),(\dt_4^3,\dt_4^4)$
	one by one.
	From Fig. \ref{fig5:observability}, one sees that
	$\dt_2^2\notin L(A_{(\dt_4^2,\dt_4^3)})$,
	$\dt_2^1\notin L(A_{(\dt_4^2,\dt_4^4)})$, and
	$\dt_2^1,\dt_2^2\notin L(A_{(\dt_4^3,\dt_4^4)})$.
	Then by Theorem \ref{alg4_observability}, BCN \eqref{eqn2_observability}
	is observable.
\end{example}

At the end of this subsection,
using the concept of weighted pair graphs, we give a further result on this
observability.

\begin{theorem}
	Consider BCN \eqref{BCN2}. Denote
	the number of non-diagonal vertices
	of its weighted pair graph by $N_{nd}$.
	The following two items are equivalent.
	\begin{enumerate}[(i)]
		\item \label{item2_observability_length}
			The BCN is observable in the sense of Definition \ref{def4_observability}.
		\item \label{item1_observability_length}
			$N_{nd}=0$ or
		for all distinct states $x_0,\bar x_0\in\Dt_N$, there is an input sequence
		$U\in(\Dt_M)^{N_{nd}}$ such that $Hx_0=H\bar x_0$ implies
		$(HL)_{x_0}^{N_{nd}}(U)\ne(HL)_{\bar x_0}^{N_{nd}}(U)$.
	\end{enumerate}
	\label{thm:length:NewObservability}
\end{theorem}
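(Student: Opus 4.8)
The plan is to prove the two implications of the equivalence separately; the only real work lies in (i)$\Rightarrow$(ii), where observability witnessed by \emph{some} length must be upgraded to the uniform length $N_{nd}$.

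For (ii)$\Rightarrow$(i) I would argue directly. If $N_{nd}=0$ there is no pair of distinct states sharing an output, so the hypothesis $Hx_0=H\bar x_0$ of Definition \ref{def4_observability} is never met and the BCN is vacuously observable. If $N_{nd}>0$, the sequences $U\in(\Dt_M)^{N_{nd}}$ supplied by (ii) are exactly the witnesses demanded by Definition \ref{def4_observability} with $p=N_{nd}$, so (i) follows.

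For the converse, assume the BCN is observable. If $N_{nd}=0$ the first disjunct of (ii) holds, so suppose $N_{nd}>0$ and fix distinct $x_0,\bar x_0\in\Dt_N$. When $Hx_0\ne H\bar x_0$ the implication in (ii) is vacuous and any $U\in(\Dt_M)^{N_{nd}}$ serves. The substantive case is $Hx_0=H\bar x_0$, so that $v_0:=(x_0,\bar x_0)$ is a non-diagonal vertex of the weighted pair graph $\Gr=(\V,\E,\W)$, and I must produce a distinguishing $U$ of length exactly $N_{nd}$. I would work inside the DFA $A_{(x_0,\bar x_0)}$ of Algorithm \ref{alg3_observability}: by observability and Theorem \ref{alg4_observability} this DFA is not complete, so some input word drives its run to an undefined transition. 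Such words are precisely the distinguishing ones, because the transition under an input $u$ out of a vertex $(z,z')$ fails to be defined exactly when $H(Luz)\ne H(Luz')$, i.e.\ exactly when the two output streams first disagree.

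The heart of the proof is a length bound on a \emph{shortest} distinguishing word $U=u_0\cdots u_{k-1}$. Its run visits vertices $v_0,\dots,v_{k-1}$ before getting stuck, and each such $v_j$ is non-diagonal: a diagonal vertex would mean the two state trajectories have already coincided, forcing identical outputs thereafter and contradicting the disagreement at the stuck step. Minimality rules out repetitions, for if $v_a=v_b$ with $a<b$ then determinism lets me delete $u_a\cdots u_{b-1}$ and obtain a strictly shorter distinguishing word. Hence $v_0,\dots,v_{k-1}$ are $k$ distinct non-diagonal vertices and $k\le N_{nd}$. Since $U$ already separates the length-$k$ output prefixes of $x_0$ and $\bar x_0$, appending any $W\in(\Dt_M)^{N_{nd}-k}$ (the map $x\mapsto Lux$ in \eqref{BCN2} is total, so trajectories extend freely) produces $UW\in(\Dt_M)^{N_{nd}}$ whose two output sequences still differ in their first $k$ coordinates; this $UW$ is the desired witness. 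I expect the main obstacle to be exactly this combinatorial step: cleanly excluding diagonal and repeated vertices along a shortest distinguishing run, so that its length is legitimately counted against the $N_{nd}$ non-diagonal vertices.
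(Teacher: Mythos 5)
Your proposal is correct and takes essentially the same approach as the paper: the paper proves (i)$\Rightarrow$(ii) contrapositively, showing that if all length-$N_{nd}$ words fail to distinguish some pair $(x,x')$ then $A_{(x,x')}$ is complete, because any undefined transition would sit at a non-diagonal state reachable by a word of length less than $N_{nd}$ (a shortest path through the at most $N_{nd}$ distinct non-diagonal vertices, diagonal vertices having all transitions defined), yielding a short rejected word. Your direct version—bounding a shortest distinguishing word by $N_{nd}$ via the same counting of distinct non-diagonal vertices along its run, then padding to length exactly $N_{nd}$—is the mirror image of this argument, resting on the same ingredients (Theorem \ref{alg4_observability}, totality of transitions at diagonal vertices, and length adjustment of distinguishing words).
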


\begin{proof}
    \eqref{item1_observability_length} $\Rightarrow$
    \eqref{item2_observability_length}:

    Obvious by  Definition
	\ref{def4_observability}.

    \eqref{item2_observability_length} $\Rightarrow$ \eqref{item1_observability_length}:

	Assume that \eqref{item1_observability_length} does not hold.
	That is, $N_{nd}>0$ and there are distinct $x,x'\in\Dt_N$,
	for all $U\in(\Dt_M)^{N_{nd}}$,
	$Hx=Hx'$ and $(HL)_{x}^{N_{nd}}(U)=(HL)_{x'}^{N_{nd}}(U)$.
	Use Algorithm \ref{alg3_observability} to generate DFA $A_{(x,x')}=(S,\Dt_M,\s,
	(x,x'),S)$. Then $\cup_{i=0}^{N_{nd}}(\Dt_M)^i\subset L(A_{(x,x')})$.
	We claim that $A_{(x,x')}$ is complete. Suppose the contrary: there is a state
	$v$ of $A_{(x,x')}$ and an input $u\in\Dt_M$ such that $\s$ is not well defined
	at $(v,u)$. Then $v$ is a non-diagonal vertex of the weighed pair graph,
	because for all diagonal vertices $v'$ (if exist), for all inputs $u'\in\Dt_M$,
	$\s$ is well defined at $(v',u')$.
    There are exactly $N_{nd}$ non-diagonal vertices, then there exists an input sequence
	$U_1$ of length less than $N_{nd}$ such that $\s((x,x'),U_1)=v$.
	We get a contradiction $U_1u\in\cup_{i=0}^{N_{nd}}(\Dt_M)^i\setminus L(A_{(x,x')}) $.
	By Theorem \ref{alg4_observability}, the BCN is not observable.
\end{proof}

\subsection{Determining the observability  in \cite{Cheng2011IdentificationBCN}}

\begin{definition}[\cite{Cheng2011IdentificationBCN}]
	BCN \eqref{BCN2} is called observable, if
there exists an input sequence $U\in(\Dt_M)^p$
for some $p\in\Z_{+}$, such that for any distinct states
$x_0,\bar x_0\in\Dt_N$,  $Hx_0=H\bar x_0$ implies
$(HL)_{x_0}^{p}(U)\ne (HL)_{\bar x_0}^{p}(U)$.
	\label{def7_observability}
\end{definition}

In this subsection, the observability of BCN \eqref{BCN2} refers to Definition \ref{def7_observability}.

According to Definition \ref{def7_observability},
to judge whether BCN \eqref{BCN2}
is observable, we need to
check the set $\V_n$ of all non-diagonal vertices
of its weighted pair graph
$(\V,\E,\W)$.

Now we design an algorithm to construct a DFA
for BCN \eqref{BCN2} according to its weighted pair graph $(\V,\E,\W)$.
The new DFA is denoted by $A_{\V_n}$, and accepts exactly every finite
input sequence by which not all non-diagonal state pairs can
be distinguished. The states of the DFA $A_{\V_n}$ are subsets of $\V$.

\begin{algo}\label{alg5_observability}
\begin{enumerate}
	\item\label{item1_obser2}
		Set $\Dt_M$ to be the alphabet of the DFA.
		Set the set $\V_n$ of all non-diagonal
		vertices of $\V$ to be
		the initial state of the DFA.
	\item\label{item2_obser2}
		For each letter $\dt_M^j$, $j\in[1,M]$, find the value
				for the transition partial
				function of the DFA at $(\V_n,\dt_M^j)$.
				The specific procedure is as follows:
				
				For each $j\in[1,M]$, let $s_j:=\{v\in\V|\text{there is }v'\in \V_n
				\text{ such that }(v',v)\in\E,\text{ and }\dt_M^j\in\W((v',v))\}$.
				If $s_j\ne\emptyset$, add $s_j$ to the state
				set of the DFA and set $s_j$ to be the value of the transition
				partial function at $(\V_n,\dt_M^j)$;
				otherwise, the transition partial function
				of the DFA is  not well defined at
				$(\V_n,\dt_M^j)$.
	\item\label{item3_obser2}
		For each new  state $s$ of the DFA found in the previous step,
				for each letter $\dt_M^j$, $j\in[1,M]$, find the value
				for the transition partial
				function of the DFA at $(s,\dt_M^j)$ according to
				Step \ref{item2_obser2}.
	\item\label{item4_obser2}
		Repeat Step \ref{item3_obser2} until
				no new state of the DFA occurs. (Since $\V$
				is a finite set, so is its power set,
				this repetition  will stop.)
	\item\label{item5_obser5}
		Set all the states of the obtained DFA  to be final states.
\end{enumerate}
\end{algo}

According to Algorithm \ref{alg5_observability}, the following
theorem holds.
\begin{theorem}
	BCN \eqref{BCN2} is not observable in the sense of Definition
	\ref{def7_observability} iff the DFA $A_{\V_n}$ generated by
	Algorithm \ref{alg5_observability} recognizes language $(\Dt_M)^*$.
	\label{thm5_observability}

\end{theorem}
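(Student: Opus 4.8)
The plan is to follow the template of Theorems \ref{thm3_observability} and \ref{thm4_observability}: I will show that $A_{\V_n}$ recognizes exactly the set of input sequences that fail to distinguish at least one non-diagonal pair, so that $L(A_{\V_n})=(\Dt_M)^*$ is equivalent to ``every input sequence leaves some pair undistinguished'', which is precisely the negation of Definition \ref{def7_observability}. The heart of the argument is to identify what a reachable DFA state means.

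First I would establish, by induction on the length of $U=u_1\cdots u_p$, the invariant that $\s(\V_n,U)$ is the set of all pairs $(x,x')\in\V$ reachable from some non-diagonal vertex $(\dt_N^i,\dt_N^j)\in\V_n$ by driving both coordinates forward with $u_1,\dots,u_p$ while every intermediate pair stays in $\V$. The base case $\s(\V_n,\e)=\V_n$ is immediate, and the inductive step is exactly Step \ref{item2_obser2} of Algorithm \ref{alg5_observability}, which appends all $\dt_M^j$-successors in the weighted pair graph. Since Definition \ref{pairgraph} places an edge only between vertices of $\V$ (pairs with equal output), a fixed non-diagonal pair $(\dt_N^i,\dt_N^j)$ contributes to $\s(\V_n,U)$ iff its two trajectories share the same output at every step $1,\dots,p$, i.e. iff $(HL)_{\dt_N^i}^p(U)=(HL)_{\dt_N^j}^p(U)$; equivalently, iff $U$ does not distinguish $(\dt_N^i,\dt_N^j)$.

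Because Step \ref{item5_obser5} makes every DFA state final, a word $U$ is accepted iff $\s(\V_n,U)$ is defined and nonempty, which by the invariant holds iff some non-diagonal pair survives $U$, i.e. iff $U$ fails to distinguish all non-diagonal pairs. For ``only if'', non-observability says that for every $p\in\Z_+$ and every $U\in(\Dt_M)^p$ some non-diagonal pair is undistinguished, so every such $U$ is accepted; moreover the existence of such an undistinguished pair already witnesses $\V_n\ne\emptyset$, whence $\e\in L(A_{\V_n})$ since the initial state $\V_n$ is final, and therefore $L(A_{\V_n})=(\Dt_M)^*$. For ``if'', $L(A_{\V_n})=(\Dt_M)^*$ forces every $U$ to leave some non-diagonal pair undistinguished, which is exactly the negation of observability.

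The step I expect to be the main obstacle is proving the invariant cleanly while respecting the unordered-pair bookkeeping of Definition \ref{pairgraph}: the edge relation allows the two images to be interchanged, so I must check that the forward image of an unordered pair is a well-defined unordered pair and that ``survival in $\V$'' is independent of any ordering of coordinates. I would also dispose of the degenerate case $\V_n=\emptyset$ separately: then the initial state is empty, no transition is ever defined, so $L(A_{\V_n})=\{\e\}\ne(\Dt_M)^*$, in agreement with the BCN being vacuously observable.
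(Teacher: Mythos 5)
Your proposal is correct and takes essentially the same approach as the paper: the paper's own proof of Theorem \ref{thm5_observability} is a one-line observation that $A_{\V_n}$ accepts exactly the finite input sequences that fail to distinguish some pair in $\V_n$, so $L(A_{\V_n})=(\Dt_M)^*$ iff the BCN is not observable in the sense of Definition \ref{def7_observability}. The only difference is that you explicitly prove this language characterization via the inductive invariant on $\s(\V_n,U)$ (including the unordered-pair bookkeeping and the degenerate case $\V_n=\emptyset$), details the paper asserts without proof.
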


\begin{proof}
	Notice that BCN \eqref{BCN2} is not observable iff none of finite input sequences
	can distinguish all state pairs of $\V_n$, that is, $L(A_{\V_n})=
	(\Dt_M)^*$.
\end{proof}

From Proposition \ref{prop6_observability},
Theorem \ref{thm5_observability} and Algorithm \ref{alg5_observability},
the following result which follows can be used to judge
whether BCN \eqref{BCN2} is observable.

\begin{theorem}\label{alg6_observability}
	BCN \eqref{BCN2} is not observable in the sense of Definition
	\ref{def7_observability} iff the DFA $A_{\V_n}$ generated by
	Algorithm \ref{alg5_observability} is complete.
\end{theorem}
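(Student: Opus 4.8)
The plan is to deduce this result by feeding the specific DFA $A_{\V_n}$ produced by Algorithm \ref{alg5_observability} into Proposition \ref{prop6_observability}, and then chaining with Theorem \ref{thm5_observability}. Theorem \ref{thm5_observability} already reduces nonobservability in the sense of Definition \ref{def7_observability} to the single statement $L(A_{\V_n})=(\Dt_M)^*$. So it suffices to show that, for this particular automaton, $L(A_{\V_n})=(\Dt_M)^*$ is equivalent to completeness of $A_{\V_n}$. That equivalence is exactly the content of Proposition \ref{prop6_observability}, but only once we have checked that $A_{\V_n}$ meets the two hypotheses of that proposition: that every state is a final state ($F=S$), and that every state is reachable from the initial state by some word over $\Dt_M$.

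First I would verify $F=S$. This is immediate from Step \ref{item5_obser5} of Algorithm \ref{alg5_observability}, which sets all states of the constructed DFA to be final. Second, I would verify reachability. By construction the state set of $A_{\V_n}$ is generated incrementally: it starts from the single initial state $\V_n$ in Step \ref{item1_obser2}, and every state introduced afterwards (Steps \ref{item2_obser2}--\ref{item4_obser2}) arises as a value $s_j=\s(s,\dt_M^j)$ of the transition function at some already-present state $s$ and some letter $\dt_M^j\in\Dt_M$. An easy induction on the order in which states are added then shows that each state $s$ equals $\s(\V_n,w)$ for some word $w\in(\Dt_M)^*$; hence every state is reachable from the initial state $\V_n$, and the reachability hypothesis of Proposition \ref{prop6_observability} holds.

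With both hypotheses in hand, I would apply Proposition \ref{prop6_observability} to $A=A_{\V_n}$ with alphabet $\Sig=\Dt_M$, initial state $s_0=\V_n$, and $F=S$, obtaining $L(A_{\V_n})=(\Dt_M)^*$ if and only if $A_{\V_n}$ is complete. Combining this equivalence with Theorem \ref{thm5_observability}, which identifies nonobservability with $L(A_{\V_n})=(\Dt_M)^*$, yields that BCN \eqref{BCN2} is not observable in the sense of Definition \ref{def7_observability} exactly when $A_{\V_n}$ is complete, as claimed.

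I expect the only genuine point requiring care to be the reachability hypothesis: it is not automatic for an arbitrary DFA, and Proposition \ref{prop6_observability} would fail without it. The reason it is legitimate here is structural, namely that Algorithm \ref{alg5_observability} never manufactures an isolated state but only ever adds a state as the image of a transition out of a state already constructed. I would therefore state this reachability property explicitly so that Proposition \ref{prop6_observability} can be invoked verbatim; the rest of the argument is a direct concatenation of the two cited results and involves no further computation.
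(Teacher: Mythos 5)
Your proposal is correct and follows exactly the paper's intended route: the paper derives Theorem \ref{alg6_observability} as an immediate consequence of Proposition \ref{prop6_observability}, Theorem \ref{thm5_observability} and the construction in Algorithm \ref{alg5_observability}, without writing out any further detail. Your explicit verification of the two hypotheses of Proposition \ref{prop6_observability} --- that $F=S$ by Step \ref{item5_obser5} and that every state is reachable because the algorithm only ever adds states as transition images of already-constructed states --- simply fills in what the paper leaves implicit.
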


\begin{example}\label{exam4_observability}
	Check whether BCN \eqref{eqn2_observability} is observable.
	
	According to Theorem \ref{alg6_observability}, we should check
	whether DFA
	$A_{\{(\dt_4^2,\dt_4^3),(\dt_4^2,\dt_4^4),
	(\dt_4^3,\dt_4^4)\}}$ is complete.

	From Fig. \ref{fig6:observability}, one sees that this DFA is complete.
	Then by Theorem \ref{alg6_observability}, BCN \eqref{eqn2_observability}
	is not observable.

\end{example}

\begin{figure}
		\centering
\begin{tikzpicture}[>=stealth',shorten >=1pt,auto,node distance=2.0 cm, scale = 1.0, transform shape,
	->,>=stealth,inner sep=2pt,state/.style={shape=circle,draw,top color=red!10,bottom color=blue!30},
	point/.style={circle,inner sep=0pt,minimum size=2pt,fill=},
	skip loop/.style={to path={-- ++(0,#1) -| (\tikztotarget)}}]
	\node[accepting,initial,state] (2*)  {$23,24,34$};
	\node[accepting,state] [above right of = 2*] (11) {$11$};
	\node[accepting,state] [below right of = 2*] (22) {$22$};
	\path [->] (2*) edge node {$2$} (11)
		   (2*) edge node {$1$} (22)
		   (22) edge [bend right] node {$2$} (11)
		   (22) edge [loop right] node {$1$} (22)
		   (11) edge [loop right] node {$1,2$} (11)
	;
		\end{tikzpicture}
	\caption{The DFA $A_{\V_{
	\{(\dt_4^2,\dt_4^3),(\dt_4^2,\dt_4^4),
	(\dt_4^3,\dt_4^4)\}	}}$ with respect to BCN
	\eqref{eqn2_observability} generated by Algorithm
	\ref{alg5_observability},
	where the number $ij$ in each circle denotes the state pair $(\dt_4^i,
	\dt_4^j)$, and the weight $k$ beside each edge denotes the input $\dt_2^k$.
	}
	\label{fig6:observability}
\end{figure}

\begin{remark}
	In \cite{Laschov2013ObservabilityofBN:GraphApproach}, it is proved that
determining this observability  is {\bf NP}-hard. Actually, the results of
\cite{Laschov2013ObservabilityofBN:GraphApproach} show that determining each of
the four types of observability  is {\bf NP}-hard.
How to determine this observability  has been
solved in \cite{Li2013ObservabilityConditionsofBCN}
by enumerating all possible input sequences of a common finite length.
However, one can use our method to find any input sequence that determines
the initial state.
Due to the independence of initial states, their method cannot
be applied to deal with
Definitions \ref{def3_observability} or \ref{def4_observability}.
\end{remark}

\subsection{Determining the observability  in
\cite{Fornasini2013ObservabilityReconstructibilityofBCN}}

\begin{definition}[\cite{Fornasini2013ObservabilityReconstructibilityofBCN}]\label{def8_observability}
BCN \eqref{BCN2} is called observable, if for any distinct states
$x_0,\bar x_0\in\Dt_N$, for any input sequence $U\in(\Dt_M)^{\N}$,
$Hx_0=H\bar x_0$ implies
$(HL)_{x_0}^{\N}(U)\ne (HL)_{\bar x_0}^{\N}(U)$.
\end{definition}

In this subsection, the observability of BCN \eqref{BCN2}
means Definition \ref{def8_observability}.

According to Definition \ref{def8_observability}, BCN \eqref{BCN2} is not
observable iff there are two distinct states $\dt_N^i,\dt_N^j$ and an input
sequence $U\in(\Dt_M)^{\N}$ such that $H\dt_N^i=H\dt_N^j$ and $(HL)_{\dt_N^i}
^{\N}(U)=(HL)_{\dt_N^j}^{\N}(U)$.
Then the following theorem can be used to determine this observability.

\begin{theorem}
	BCN \eqref{BCN2} is not observable in the sense of Definition
	\ref{def8_observability} iff there is a non-diagonal vertex
	$(\dt_N^i,\dt_N^j)$ of the weighted pair graph of BCN \eqref{BCN2}
	such that the transition graph of the DFA
	$A_{(\dt_N^i,\dt_N^j)}$ generated by Algorithm \ref{alg3_observability}
	has a cycle.
	\label{thm6_observability}
\end{theorem}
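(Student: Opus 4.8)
The plan is to show that both sides of the claimed equivalence are merely reformulations of the existence of an \emph{infinite walk} in the weighted pair graph starting from a non-diagonal vertex. The bridge I would establish first is a correspondence between walks and equal-output input sequences. By Definition \ref{pairgraph}, an edge from $(x_1,x_1')$ to $(x_2,x_2')$ carrying weight $u$ means exactly that the input $u$ maps the pair $(x_1,x_1')$ onto the pair $(x_2,x_2')$ (allowing, by the unordered-pair convention, the two trajectories to swap roles), and every vertex $(x_2,x_2')\in\V$ satisfies $Hx_2=Hx_2'$. Hence a finite or infinite walk $(\dt_N^i,\dt_N^j)=v_0,v_1,v_2,\dots$ labelled by inputs $u_0,u_1,\dots$ corresponds precisely to an input sequence under which the two trajectories starting from $\dt_N^i$ and $\dt_N^j$ produce equal outputs at every positive time step; since the initial vertex is non-diagonal, $H\dt_N^i=H\dt_N^j$, so the outputs also coincide at time $0$. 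Therefore $(HL)_{\dt_N^i}^{\N}(U)=(HL)_{\dt_N^j}^{\N}(U)$ holds for an infinite $U\in(\Dt_M)^{\N}$ if and only if there is an infinite walk from $(\dt_N^i,\dt_N^j)$ in the pair graph.

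Next I would invoke the elementary fact that in a finite directed graph an infinite walk from a prescribed vertex exists if and only if that vertex can reach a cycle. For the nontrivial direction, an infinite walk visits only finitely many vertices, so by the pigeonhole principle some vertex recurs, and the closed sub-walk between two of its occurrences contains a cycle; conversely, one reaches the cycle and loops it forever. Because Algorithm \ref{alg3_observability} retains exactly the subgraph of vertices reachable from the initial vertex $(\dt_N^i,\dt_N^j)$, every state of the DFA $A_{(\dt_N^i,\dt_N^j)}$ is reachable from $(\dt_N^i,\dt_N^j)$; consequently ``$A_{(\dt_N^i,\dt_N^j)}$ has a cycle'' is equivalent to ``there is an infinite walk from $(\dt_N^i,\dt_N^j)$''.

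Assembling these two equivalences over all non-diagonal vertices yields the theorem. For the ``only if'' direction, non-observability in the sense of Definition \ref{def8_observability} supplies distinct $\dt_N^i,\dt_N^j$ with $H\dt_N^i=H\dt_N^j$ and an infinite input $U$ forcing equal output streams; the correspondence turns $U$ into an infinite walk from $(\dt_N^i,\dt_N^j)$, and the finite-graph fact extracts a cycle in $A_{(\dt_N^i,\dt_N^j)}$. For the ``if'' direction, a cycle in some $A_{(\dt_N^i,\dt_N^j)}$ is reachable from the initial vertex, so prepending a path to the cycle and then traversing the cycle forever produces an infinite walk, hence an infinite input sequence witnessing $(HL)_{\dt_N^i}^{\N}(U)=(HL)_{\dt_N^j}^{\N}(U)$, i.e.\ non-observability.

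The only genuinely delicate point is the correspondence in the first step: I must verify that the edge and weight conditions of Definition \ref{pairgraph} really do coincide with ``equal outputs at the next step'', and that following an infinite walk never forces a trajectory out of the pair graph (this is guaranteed because every vertex enforces $Hx=Hx'$). The finiteness argument and the two acceptance directions are then routine, and I expect no difficulty from the reachability bookkeeping, since Algorithm \ref{alg3_observability} builds the transition graph of $A_{(\dt_N^i,\dt_N^j)}$ precisely as the reachable subgraph.
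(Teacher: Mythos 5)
Your proposal is correct and takes essentially the same approach as the paper, whose one-sentence proof invokes exactly this reasoning: since the transition graph is finite, it has a cycle iff there is an infinite input sequence $U$ with $(HL)_{\dt_N^i}^{\N}(U)=(HL)_{\dt_N^j}^{\N}(U)$. You merely make explicit the walk/input-sequence correspondence, the pigeonhole extraction of a cycle, and the reachability bookkeeping that the paper leaves implicit.
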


\begin{proof}
	Since the transition graph has a finite number of vertices,
	the graph has a cycle iff there is an input sequence
	$U\in(\Dt_M)^{\N}$ such that $(HL)_{\dt_N^i}^{\N}(U)=
	(HL)_{\dt_N^j}^{\N}(U)$.
\end{proof}

In fact, one can determine the observability directly from the weighted pair
graph of BCN \eqref{BCN2}. Theorem \ref{thm6_observability} directly implies
the following result.

\begin{theorem}
	BCN \eqref{BCN2} is not observable in the sense of Definition
	\ref{def8_observability} iff there is a cycle in its weighted pair graph,
	and either the cycle contains a non-diagonal vertex, or there is a path
	from a non-diagonal vertex to the cycle.
	\label{thm11_observability}
\end{theorem}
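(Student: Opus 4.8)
The plan is to reduce Theorem~\ref{thm11_observability} to the already-established Theorem~\ref{thm6_observability} by unwinding what the transition graph of $A_{(\dt_N^i,\dt_N^j)}$ actually is. By construction (Algorithm~\ref{alg3_observability}), this DFA is nothing but the subgraph of the weighted pair graph $\Gr=(\V,\E,\W)$ generated by the vertices reachable from $(\dt_N^i,\dt_N^j)$; its transition graph is therefore a subgraph of $\Gr$ containing every vertex reachable from $(\dt_N^i,\dt_N^j)$ together with all edges of $\Gr$ joining two such vertices. Consequently, ``the transition graph of $A_{(\dt_N^i,\dt_N^j)}$ has a cycle'' means precisely ``$\Gr$ contains a cycle all of whose vertices are reachable from the non-diagonal vertex $(\dt_N^i,\dt_N^j)$.'' The task reduces to showing that this reachability statement is equivalent to the graph condition in Theorem~\ref{thm11_observability}.

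For the forward implication I would assume the hypothesis of Theorem~\ref{thm6_observability}: some non-diagonal vertex $v_0=(\dt_N^i,\dt_N^j)$ is such that the transition graph of $A_{v_0}$ contains a cycle $C$. By the observation above, $C$ is a cycle of $\Gr$ with all of its vertices reachable from $v_0$. I then split into two cases. If some vertex of $C$ is non-diagonal, then $C$ itself contains a non-diagonal vertex, which is the first alternative of Theorem~\ref{thm11_observability}. Otherwise every vertex of $C$ is diagonal; since $v_0$ is non-diagonal it cannot lie on $C$, yet $v_0$ reaches $C$, so there is a path from the non-diagonal vertex $v_0$ to the cycle, which is the second alternative.

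For the converse I would start from a cycle $C$ in $\Gr$ satisfying the condition of Theorem~\ref{thm11_observability} and exhibit a non-diagonal vertex $v_0$ from which $C$ is entirely reachable. If $C$ contains a non-diagonal vertex, take that vertex as $v_0$; since every vertex of a directed cycle reaches every other by traversing the cycle, all vertices of $C$ are reachable from $v_0$. If instead there is a path from a non-diagonal vertex $v_0$ to $C$, that path reaches one vertex of $C$, whence the remaining vertices of $C$ are reached by continuing around the cycle. In either case all vertices and hence all edges of $C$ lie in the subgraph generated by the vertices reachable from $v_0$, so $C$ persists in the transition graph of $A_{v_0}$, which therefore has a cycle. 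Theorem~\ref{thm6_observability} then yields non-observability.

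The only point requiring genuine care, and the step I would flag as the main obstacle, is the diagonal/non-diagonal bookkeeping in the case split: one must confirm that a cycle consisting entirely of diagonal vertices still certifies non-observability, which works precisely because such a cycle must be fed by a non-diagonal start vertex (capturing two distinct, output-indistinguishable trajectories that merge and thereafter evolve together). Beyond this, the argument is a direct translation between reachability in the DFA $A_{(\dt_N^i,\dt_N^j)}$ and reachability in $\Gr$, so I expect no further difficulty.
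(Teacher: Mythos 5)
Your proposal is correct and follows exactly the route the paper takes: the paper derives Theorem~\ref{thm11_observability} directly from Theorem~\ref{thm6_observability}, since by Algorithm~\ref{alg3_observability} the transition graph of $A_{(\dt_N^i,\dt_N^j)}$ is precisely the subgraph of the weighted pair graph generated by the vertices reachable from $(\dt_N^i,\dt_N^j)$. Your write-up merely makes explicit the reachability translation and the diagonal/non-diagonal case split that the paper leaves implicit in the phrase ``directly implies,'' and both steps are carried out correctly.
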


\begin{example}\label{exam5_observability}
	Check whether BCN \eqref{eqn2_observability} is observable.

	By Theorem \ref{thm11_observability} and Fig. \ref{fig3:observability},
	BCN \eqref{eqn2_observability} is not observable.
\end{example}

\begin{remark}
An equivalent condition for this observability is given
in \cite{Fornasini2013ObservabilityReconstructibilityofBCN} by
checking each pair of distinct periodic state-input trajectories of the
same minimal period and  same length.
In addition, a specific critical length is given in \cite{Fornasini2013ObservabilityReconstructibilityofBCN}
such that if none of the input sequences of
that specific length can determine the initial
states, nor can input sequences of any other length.
Due to the independence of initial states and inputs, their method cannot
be used to deal with
Definitions \ref{def3_observability} or \ref{def4_observability} either.
\end{remark}

By the end of this subsection, we give a further result on this observability.

\begin{theorem}
	Consider BCN \eqref{BCN2}. Denote
	the number of non-diagonal vertices
	of its weighted pair graph by $N_{nd}$.
	The BCN is observable in the sense of Definition \ref{def8_observability}, iff
	$N_{nd}=0$ or,
	for all distinct states $x_0,\bar x_0\in\Dt_N$, for all input sequences
	$U\in(\Dt_M)^{N_{nd}}$, $Hx_0=H\bar x_0$ implies
	$(HL)_{x_0}^{N_{nd}}(U)\ne(HL)_{\bar x_0}^{N_{nd}}(U)$.
	\label{thm1:length:NewObservability}
\end{theorem}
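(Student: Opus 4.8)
The plan is to prove the two directions of the equivalence separately, mirroring the structure of the proof of Theorem \ref{thm:length:NewObservability} but replacing its finite-acceptance argument by the cycle criterion of Theorem \ref{thm11_observability}. Throughout I would write $\Gr=(\V,\E,\W)$ for the weighted pair graph and keep in mind that a pair $(x_k,\bar x_k)$ lies in $\V$ precisely when $Hx_k=H\bar x_k$. Thus the hypothesis $(HL)_{x_0}^{p}(U)=(HL)_{\bar x_0}^{p}(U)$ is exactly the statement that the successive pairs $(x_1,\bar x_1),\dots,(x_p,\bar x_p)$ produced by $U$ all stay in $\V$ and form a path out of $(x_0,\bar x_0)$ along edges whose weights contain the corresponding input letters.

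For the implication from the length-$N_{nd}$ condition to observability I would argue directly from Definition \ref{def8_observability}. If $N_{nd}=0$ there are no non-diagonal vertices, so no two distinct states share an output and observability holds vacuously. Otherwise, given distinct $x_0,\bar x_0$ with $Hx_0=H\bar x_0$ and any infinite input $U\in(\Dt_M)^{\N}$, I apply the hypothesis to the prefix $U[0,N_{nd}-1]\in(\Dt_M)^{N_{nd}}$; since the first $N_{nd}$ symbols of $(HL)_{x_0}^{\N}(U)$ coincide with $(HL)_{x_0}^{N_{nd}}(U[0,N_{nd}-1])$, the two infinite output sequences already differ within the first $N_{nd}$ positions, which is observability.

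The substance is the contrapositive of the reverse implication: assuming the length-$N_{nd}$ condition fails and $N_{nd}>0$, I would exhibit an infinite input witnessing non-observability. The failure yields distinct $x_0,\bar x_0$ with $Hx_0=H\bar x_0$ and an input $U=u_0\dots u_{N_{nd}-1}$ of length $N_{nd}$ keeping the outputs equal for $N_{nd}$ steps; this gives a path $v_0,v_1,\dots,v_{N_{nd}}$ in $\Gr$ with $v_0=(x_0,\bar x_0)$ non-diagonal and $v_k=(x_k,\bar x_k)\in\V$ for all $k$. The path has $N_{nd}+1$ vertices, and I would split on whether a diagonal vertex occurs. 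If every $v_k$ is non-diagonal, then since there are only $N_{nd}$ non-diagonal vertices, two of them coincide by the pigeonhole principle, producing a cycle through non-diagonal vertices reachable from $v_0$. If instead some $v_k$ is diagonal, then, using that every diagonal vertex $(x,x)$ has an outgoing edge under every input (as $(Lux,Lux)$ is again a vertex), the path can be extended indefinitely among the finitely many diagonal vertices and must close into a cycle, which is reachable from the non-diagonal $v_0$. In either case Theorem \ref{thm11_observability} delivers non-observability.

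I expect the pigeonhole count together with the diagonal-vertex case to be the only delicate point: one must verify that $N_{nd}+1$ path vertices against $N_{nd}$ non-diagonal vertices forces a repetition, and that reaching a diagonal vertex genuinely yields a \emph{reachable} cycle rather than a dead end, which is precisely what the completeness of diagonal vertices guarantees. Once these are secured, invoking Theorem \ref{thm11_observability} closes the argument, and the length $N_{nd}$ is seen to be sharp exactly because it equals the number of non-diagonal vertices.
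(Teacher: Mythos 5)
Your proposal is correct and follows essentially the same route as the paper: prove the contrapositive, trace the path of $N_{nd}+1$ pair-vertices generated by the violating input $U$, apply the pigeonhole principle against the $N_{nd}$ non-diagonal vertices, and invoke the cycle criterion of Theorem \ref{thm11_observability}. The only (cosmetic) difference is in the diagonal case: the paper concludes directly from Definition \ref{def8_observability} by exhibiting the infinite input $U(\dt_M^1)^{\infty}$, under which the two output sequences agree forever, whereas you close a cycle among the diagonal vertices and route this case through Theorem \ref{thm11_observability} as well --- both arguments rest on the same fact that diagonal vertices have well-defined successors (which remain diagonal) under every input.
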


\begin{proof}
	``if'':
	
	Obvious by Definition
	\ref{def8_observability}.

	``only if'':

	Assume that
	$N_{nd}>0$ and there are distinct $x,x'\in\Dt_N$ and an input sequence
	$U\in(\Dt_M)^{N_{nd}}$ such that
	$Hx=Hx'$ and $(HL)_{x}^{N_{nd}}(U)=(HL)_{x'}^{N_{nd}}(U)$.
	Use Algorithm \ref{alg3_observability} to generate DFA $A_{(x,x')}=(S,\Dt_M,\s,
	(x,x'),S)$. Then $U\in L(A_{(x,x')})$. Denote $\s( (x,x'),U)$ by $v_U$.
	If $v_U$ is diagonal, then $(HL)_{x}^{\N}(U(\dt_M^1)^{\infty})=
	(HL)_{x'}^{\N}(U(\dt_M^1)^{\infty})$, and the BCN is not observable.
	If $v_U$ is not diagonal, there are distinct $i,j\in[1,N_{nd}]$ such that either
	$\s( (x,x'),U[1,i])=\s( (x,x'),U[1,j])$ or $(x,x')=\s( (x,x'),U[1,j])$,
	for there are exactly $N_{nd}$ non-diagonal vertices.
	By Theorem \ref{thm11_observability}, the BCN is not observable.
\end{proof}

\section{Pairwise nonequivalence of the four types of observability of Boolean control networks}
\label{sec4}

In this section, we prove that no pairs of the four types of observability of BCNs
are equivalent, which reveals the essence of nonlinearity of BCNs (shown in Fig.
\ref{fig10:observability}).

\begin{theorem}
	If BCN \eqref{BCN2} is observable in the sense of Definition
	\ref{def3_observability}, then it is also observable in the sense of
	Definition \ref{def4_observability}. The converse is not true.
	\label{thm2_observability}
\end{theorem}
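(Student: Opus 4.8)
The plan is to prove the forward implication by a direct rearrangement of quantifiers, and to refute the converse by exhibiting an explicit BCN that is observable in the sense of Definition \ref{def4_observability} but not in the sense of Definition \ref{def3_observability}. For the forward direction, note that Definition \ref{def3_observability} has the logical shape $\forall x_0\,\exists U\,\forall\bar x_0$, while Definition \ref{def4_observability} has the weaker shape $\forall x_0\,\forall\bar x_0\,\exists U$. First I would fix distinct states $x_0,\bar x_0\in\Dt_N$; if $Hx_0\ne H\bar x_0$ the implication in Definition \ref{def4_observability} is vacuous, so assume $Hx_0=H\bar x_0$. Definition \ref{def3_observability} supplies an input sequence $U\in(\Dt_M)^p$ separating $x_0$ from \emph{every} state of the same initial output; in particular $(HL)_{x_0}^{p}(U)\ne(HL)_{\bar x_0}^{p}(U)$. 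As the pair was arbitrary, this is exactly Definition \ref{def4_observability}. This step is routine.

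For the converse I would construct the BCN
\begin{equation*}
  x(t+1)=\dt_4[1,4,1,4,1,1,4,4]\,u(t)x(t),\qquad y(t)=\dt_2[1,1,1,2]\,x(t),
\end{equation*}
with $x\in\Dt_4$ and $u,y\in\Dt$. Here $\dt_4^1,\dt_4^2,\dt_4^3$ share output $\dt_2^1$ and $\dt_4^4$ has output $\dt_2^2$, so the non-diagonal vertices of the weighted pair graph are exactly $(\dt_4^1,\dt_4^2)$, $(\dt_4^1,\dt_4^3)$, $(\dt_4^2,\dt_4^3)$. The dynamics are rigged so that input $\dt_2^1$ sends $\dt_4^2$ to $\dt_4^4$ (separating $(\dt_4^1,\dt_4^2)$ at the first step) but merges $\dt_4^1$ and $\dt_4^3$ at $\dt_4^1$, while symmetrically input $\dt_2^2$ separates $(\dt_4^1,\dt_4^3)$ but merges $\dt_4^1$ and $\dt_4^2$.

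To verify Definition \ref{def4_observability} I would run Algorithm \ref{alg3_observability} on each non-diagonal vertex and apply Theorem \ref{alg4_observability}: the pair $(\dt_4^2,\dt_4^3)$ is separated by either letter at the first step, and each of $(\dt_4^1,\dt_4^2),(\dt_4^1,\dt_4^3)$ is separated by one of the two letters, so in every case the transition function of the resulting DFA is undefined at the initial state for at least one letter, hence no DFA is complete. To see Definition \ref{def3_observability} fails I would run Algorithm \ref{alg1_observability} on $\dt_4^1$: its initial state is $\{(\dt_4^1,\dt_4^2),(\dt_4^1,\dt_4^3)\}$, and under \emph{either} input this set maps to the single diagonal vertex $\{(\dt_4^1,\dt_4^1)\}$, which self-loops under both letters. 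Thus $A_{\dt_4^1}$ is complete with all states final, so by Theorem \ref{alg2_observability} the BCN is not observable in the sense of Definition \ref{def3_observability}.

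I expect the only real difficulty to lie in designing the counterexample rather than in checking it. The guiding intuition is the quantifier order: Definition \ref{def3_observability} demands a \emph{single} input sequence simultaneously separating a fixed initial state from all of its competitors, whereas Definition \ref{def4_observability} lets each competing pair choose its own input. The counterexample must therefore produce a state $\dt_4^1$ with two competitors $\dt_4^2,\dt_4^3$ that are individually separable, yet for which every input immediately collapses $\dt_4^1$ onto one competitor or the other; encoding this ``no common separating input'' obstruction into an explicit pair $(L,H)$ is the creative step.
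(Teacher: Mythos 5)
Your proposal is correct and takes essentially the same route as the paper: the forward implication is the same quantifier-weakening argument, and the converse is refuted by a counterexample built on exactly the mechanism the paper exploits in BCN \eqref{eqn2_observability}, where the state $\dt_4^2$ is merged with $\dt_4^3$ under input $\dt_2^1$ and with $\dt_4^4$ under $\dt_2^2$, even though each competing pair is separable by some length-one input. I verified your system $L=\dt_4[1,4,1,4,1,1,4,4]$, $H=\dt_2[1,1,1,2]$: it realizes the identical obstruction for $\dt_4^1$ (every input letter collapses it onto one of its two competitors $\dt_4^2,\dt_4^3$), so the only difference from the paper is cosmetic --- a fresh example and verification via the DFA completeness criteria (Theorems \ref{alg2_observability} and \ref{alg4_observability}) instead of the paper's direct computation of output sequences.
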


\begin{proof}
	The first part naturally follows from Definitions
	\ref{def3_observability} and \ref{def4_observability}.
	We use BCN \eqref{eqn2_observability} to prove the second part.

	First, we prove that BCN \eqref{eqn2_observability} is not observable
	in the sense of Definition \ref{def3_observability}.

	Denote $M:=\dt_4[1,1,2,1,2,4,1,1]W_{[2,4]}{\bf1}_2
	=\dt_2[1,2,2,1,1,1,4,1]{\bf1}_2=\left[
	\begin{smallmatrix}
		2 & 1 & 0 & 2\\ 0 & 1 & 1 & 0\\ 0 & 0 & 0 & 0\\
		0 & 0 & 1 & 0
	\end{smallmatrix}\right]$. Then for all $k\in\Z_{+}$, $M^k=\left[
	\begin{smallmatrix}
		* & * & * & *\\ * & * & * & *\\0 & 0 & 0 & 0\\0 & 0 & * & 0
	\end{smallmatrix}\right].$ By 
	\cite[Theorem 3.3]{Zhao2010InputStateIncidenceMatrix},
	BCN \eqref{eqn2_observability} is not controllable.
	So one cannot use the test criteria proposed in
	\cite{Cheng2009bn_ControlObserva}
	to check whether BCN \eqref{eqn2_observability} is observable.

	Next we prove that BCN \eqref{eqn2_observability} is not observable
	by showing that for state $\dt_4^2$, there is no input sequence
	such that the corresponding output sequence can determine it.
	We only need to consider states $\dt_4^3,\dt_4^4$,
	as $H\dt_4^1\ne H\dt_4^2$.
	Arbitrarily given an input sequence $U\in(\Dt)^{\N}$. If $U(0)=\dt_2^1$,
	then $L_{\dt_4^2}^1(\dt_2^1)=L_{\dt_4^3}^1(\dt_2^1)=\dt_4^2$.
	Then for each such $U$, $(HL)_{\dt_4^2}^{\N}(U)=
	(HL)_{\dt_4^3}^{\N}(U)$. Else if $U(0)=\dt_2^2$,
	then $L_{\dt_4^2}^1(\dt_2^2)=L_{\dt_4^4}^1(\dt_2^2)=\dt_4^1$.
	Then for each such $U$, $(HL)_{\dt_4^2}^{\N}(U)=
	(HL)_{\dt_4^4}^{\N}(U)$. Then
    BCN \eqref{eqn2_observability}
	is not observable in the sense of Definition \ref{def3_observability}.

	Second, we prove that BCN \eqref{eqn2_observability} is observable
	in the sense of Definition \ref{def4_observability}.
	We only need to check the state pairs $(\dt_4^2,\dt_4^3)$,
	$(\dt_4^2,\dt_4^4)$ and $(\dt_4^3,\dt_4^4)$.

	For $(\dt_4^2,\dt_4^3)$, $(HL)_{\dt_4^2}^1(\dt_2^2)=\dt_2^1\ne
	(HL)_{\dt_4^3}^1(\dt_2^2)=\dt_2^2$.

	For $(\dt_4^2,\dt_4^4)$, $(HL)_{\dt_4^2}^1(\dt_2^1)=\dt_2^2\ne
	(HL)_{\dt_4^4}^1(\dt_2^1)=\dt_2^1$.

	For $(\dt_4^3,\dt_4^4)$, $(HL)_{\dt_4^3}^1(\dt_2^1)=\dt_2^2\ne
	(HL)_{\dt_4^4}^1(\dt_2^1)=\dt_2^1$.

	Thus, BCN \eqref{eqn2_observability} is observable in the sense of
	Definition \ref{def4_observability}.
\end{proof}

\begin{theorem}
	If BCN \eqref{BCN2} is observable in the sense of Definition
	\ref{def8_observability}, then it is also observable in the sense of
	Definition \ref{def4_observability}. The converse is not true.
	\label{thm7_observability}
\end{theorem}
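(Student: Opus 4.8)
Let me understand what Theorem \ref{thm7_observability} claims and how I would prove it.The plan is to prove the implication directly from the two definitions and to establish the failure of the converse by reusing the running example BCN \eqref{eqn2_observability}, which the paper has already analyzed under both notions.

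For the forward direction, I would fix any distinct states $x_0,\bar x_0\in\Dt_N$ with $Hx_0=H\bar x_0$ and manufacture the finite distinguishing input sequence demanded by Definition \ref{def4_observability}. Since the BCN is observable in the sense of Definition \ref{def8_observability}, \emph{every} infinite input sequence distinguishes $x_0$ and $\bar x_0$; in particular, choosing any single $U\in(\Dt_M)^{\N}$ already yields $(HL)_{x_0}^{\N}(U)\ne(HL)_{\bar x_0}^{\N}(U)$. Two distinct infinite output words must disagree at some finite position $p\in\Z_{+}$, so the length-$p$ prefix $U[0,p-1]$ satisfies $(HL)_{x_0}^{p}(U[0,p-1])\ne(HL)_{\bar x_0}^{p}(U[0,p-1])$. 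This is exactly the finite distinguishing input required by Definition \ref{def4_observability}, closing the forward implication.

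For the converse, I would exhibit a BCN that is observable under Definition \ref{def4_observability} but not under Definition \ref{def8_observability}; the example BCN \eqref{eqn2_observability} serves this purpose without further work. By Example \ref{exam3_observability} it is observable in the sense of Definition \ref{def4_observability}, whereas by Example \ref{exam5_observability} (via Theorem \ref{thm11_observability} and the cycle through the non-diagonal vertices of its weighted pair graph in Fig.\ \ref{fig3:observability}) it fails to be observable in the sense of Definition \ref{def8_observability}. Hence Definition \ref{def8_observability} is strictly stronger than Definition \ref{def4_observability}.

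There is no serious obstacle here: the forward direction rests on the elementary observation that failing to distinguish a pair over the whole infinite horizon is a weaker demand than distinguishing it at some finite time, so finite distinguishability for every pair follows once infinite-horizon distinguishability holds for every input. The only point requiring a little care is the indexing convention of $(HL)^{p}$ versus $(HL)^{\N}$: one must confirm that a disagreement at position $p$ of the infinite output word is witnessed by the length-$p$ input prefix under the paper's convention $u_0\dots u_{p-1}\mapsto y_1\dots y_p$ from \eqref{hx0n}, but this is immediate.
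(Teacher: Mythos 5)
Your proposal is correct and follows essentially the same route as the paper: the forward implication is the paper's ``follows from the definitions'' step, which you merely spell out via the (correct) observation that two distinct infinite output words must disagree at some finite position so a length-$p$ input prefix witnesses Definition \ref{def4_observability}, and the converse uses the same counterexample BCN \eqref{eqn2_observability}, with observability under Definition \ref{def4_observability} established by the same computation (the paper cites its Theorem \ref{thm2_observability}, you cite Example \ref{exam3_observability}) and non-observability under Definition \ref{def8_observability} shown equivalently (the paper exhibits the explicit witness $\dt_2^2(\dt_2^1)^{\infty}$ for the pair $(\dt_4^2,\dt_4^4)$, you invoke the cycle criterion of Theorem \ref{thm11_observability}). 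No gaps.
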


\begin{proof}
	The first part follows from Definitions
	\ref{def4_observability} and \ref{def8_observability}.
	We also use BCN \eqref{eqn2_observability} to prove the second part.

	We have proved that BCN \eqref{eqn2_observability} is observable
	in the sense of Definition \ref{def4_observability} in Theorem
	\ref{thm2_observability}.
	BCN \eqref{eqn2_observability} is not observable
	in the sense of Definition \ref{def8_observability}, because 
	$H\dt_4^2=H\dt_4^4=\dt_2[1,2,2,2]\dt_4^2=\dt_2^2$ and
	$(HL)_{\dt_4^2}^{\N}(\dt_2^2(\dt_2^1)^{\infty})=
	(HL)_{\dt_4^4}^{\N}(\dt_2^2(\dt_2^1)^{\infty})$.
\end{proof}

\begin{theorem}
	If BCN \eqref{BCN2} is observable in the sense of Definition
	\ref{def8_observability}, then it is also observable in the sense of 
	Definition \ref{def7_observability}. The converse is not true.
	\label{thm10_observability}
\end{theorem}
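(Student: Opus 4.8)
The plan is to obtain the forward implication for free from the length characterization in Theorem~\ref{thm1:length:NewObservability}, and to refute the converse by building a small BCN that separates Definitions~\ref{def7_observability} and~\ref{def8_observability}.

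For the forward direction I would argue as follows. Suppose BCN~\eqref{BCN2} is observable in the sense of Definition~\ref{def8_observability}, and let $N_{nd}$ be the number of non-diagonal vertices of its weighted pair graph. By Theorem~\ref{thm1:length:NewObservability}, either $N_{nd}=0$, or for every $U\in(\Dt_M)^{N_{nd}}$ and all distinct $x_0,\bar x_0\in\Dt_N$, $Hx_0=H\bar x_0$ implies $(HL)_{x_0}^{N_{nd}}(U)\ne(HL)_{\bar x_0}^{N_{nd}}(U)$. If $N_{nd}=0$ then no two distinct states share an output, so the implication in Definition~\ref{def7_observability} is vacuous and any $U\in(\Dt_M)^1$ certifies observability. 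Otherwise I would simply fix one $U\in(\Dt_M)^{N_{nd}}$: the quoted property says this single $U$ separates every admissible pair at once, which is exactly Definition~\ref{def7_observability} with $p=N_{nd}$. The essential point is that Definition~\ref{def8_observability} forces \emph{every} input of the critical length to separate \emph{every} pair, so one need not search for a good sequence --- an arbitrary word of length $N_{nd}$ suffices.

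To show the converse fails I would exhibit a BCN in which one input letter splits all equally-outputting pairs in a single step, while a second letter leaves some such pair fixed forever. Concretely, take $n=2$ and $m=q=1$, so $N=4$ and $M=Q=2$, with $H=\dt_2[1,1,2,2]$ (hence the only non-diagonal vertices are $(\dt_4^1,\dt_4^2)$ and $(\dt_4^3,\dt_4^4)$) and $L=\dt_4[3,1,1,3,1,2,3,4]$. Under $\dt_2^1$ one checks $H(L\dt_2^1\dt_4^1)\ne H(L\dt_2^1\dt_4^2)$ and $H(L\dt_2^1\dt_4^3)\ne H(L\dt_2^1\dt_4^4)$, so the length-one word $U=\dt_2^1$ distinguishes both pairs and Definition~\ref{def7_observability} holds. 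Under $\dt_2^2$, however, $L\dt_2^2\dt_4^1=\dt_4^1$ and $L\dt_2^2\dt_4^2=\dt_4^2$, so the trajectories from $\dt_4^1$ and $\dt_4^2$ never move and the infinite input $(\dt_2^2)^{\infty}$ produces identical output sequences; thus Definition~\ref{def8_observability} fails.

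The forward part is immediate once Theorem~\ref{thm1:length:NewObservability} is invoked, so the real work is the counterexample. The main obstacle is engineering $L$ so that the two demands coexist: one input must be informative enough to break every admissible pair simultaneously (securing Definition~\ref{def7_observability}), while another must induce a self-loop --- equivalently, by Theorem~\ref{thm11_observability}, a cycle through a non-diagonal vertex of the weighted pair graph --- that traps a pair under some infinite input (breaking Definition~\ref{def8_observability}). Once $H$ and the output partition are fixed, verifying both claims collapses to a handful of one-step evaluations of $H\circ L$, which I would record explicitly as above.
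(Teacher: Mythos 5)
Your proof is correct, but both halves take routes that differ from the paper's in instructive ways. For the forward implication the paper argues directly: fix an \emph{arbitrary} $U\in(\Dt_M)^{\N}$; Definition \ref{def8_observability} forces every admissible pair to disagree at some finite time along the output sequences, and since there are only finitely many pairs ($N<+\infty$) one $p\in\Z_+$ bounds all disagreement times, so the prefix $U[0,p-1]$ witnesses Definition \ref{def7_observability}. You instead route through Theorem \ref{thm1:length:NewObservability} and fix one word of the critical length $N_{nd}$; this is legitimate (that theorem is proved independently via Theorems \ref{thm6_observability} and \ref{thm11_observability}, so there is no circularity) and it buys an explicit uniform bound $p=N_{nd}$, at the cost of invoking a heavier result where two lines of finiteness suffice --- and you correctly disposed of the vacuous case $N_{nd}=0$. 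For the converse, the paper's BCN \eqref{eqn4_observability} breaks Definition \ref{def8_observability} by \emph{merging}: under input $\dt_2^2$ both $\dt_4^3$ and $\dt_4^4$ map to the same successor, so their trajectories coincide thereafter. Your BCN ($H=\dt_2[1,1,2,2]$, $L=\dt_4[3,1,1,3,1,2,3,4]$ in the $Lu(t)x(t)$ ordering of \eqref{BCN2}) instead makes $\dt_2^2$ act as the identity, freezing the non-diagonal pair $(\dt_4^1,\dt_4^2)$ forever --- a self-loop at a non-diagonal vertex, which is exactly the cycle criterion of Theorem \ref{thm11_observability} --- while $\dt_2^1$ sends $\dt_4^1\mapsto\dt_4^3$, $\dt_4^2\mapsto\dt_4^1$, $\dt_4^3\mapsto\dt_4^1$, $\dt_4^4\mapsto\dt_4^3$, separating both non-diagonal pairs in one step, so $U=\dt_2^1$ witnesses Definition \ref{def7_observability}; I checked these one-step evaluations and they are all correct. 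The two counterexamples thus exploit the two distinct failure mechanisms that Theorem \ref{thm11_observability} permits (trajectory merging versus an undistinguished invariant pair), making your construction a genuine and equally valid alternative to the paper's.
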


\begin{proof}
	Assume that a given BCN \eqref{BCN2} is observable in the sense of 
	Definition \ref{def8_observability}, then arbitrarily given
	$U\in(\Dt_M)^{\N}$,
	for any distinct $\dt_N^i,\dt_N^j$, $H\dt_N^i=H\dt_N^j$ implies
	$(HL)_{\dt_N^i}^{\N}(U)\ne(HL)_{\dt_N^j}^{\N}(U)$.
	Since $N<+\infty$, there is $p\in\Z_{+}$ such that for any distinct
	$\dt_N^i,\dt_N^j$, $H\dt_N^i=H\dt_N^j$ implies
	$(HL)_{\dt_N^i}^{p}(U[0,p-1])\ne(HL)_{\dt_N^j}^{p}(U[0,p-1])$.
	That is, the BCN is observable in the sense of
	Definition \ref{def7_observability}.

	To prove the second part, consider the following BCN:
	\begin{equation}
		\begin{split}
			x(t+1) &= \dt_4[1,1,3,3,1,2,3,2]x(t)u(t),\\
			y(t) &= \dt_2[1,1,2,2]x(t),
		\end{split}
		\label{eqn4_observability}
	\end{equation}
	where $t\in\N$, $x\in\Dt_4$, $y,u\in\Dt$.

	Choose $U=\dt_2^1\in(\Dt)^1$. $H\dt_4^1=H\dt_4^2=\dt_2^1$,
	$(HL)_{\dt_4^1}^1(U)=\dt_2^1\ne(HL)_{\dt_4^2}^1(U)=\dt_2^2$.
	$H\dt_4^3=H\dt_4^4=\dt_2^2$,
	$(HL)_{\dt_4^3}^1(U)=\dt_2^1\ne(HL)_{\dt_4^4}^1(U)=\dt_2^2$.
	Then BCN \eqref{eqn4_observability} is observable in the sense of
	Definition \ref{def7_observability}.

	Consider any $U\in(\Dt)^{\N}$ such that $U(0)=\dt_2^2$.
	Then $L_{\dt_4^3}^{\N}(U)=L_{\dt_4^4}^{\N}(U)$ and
	$(HL)_{\dt_4^3}^{\N}(U)=(HL)_{\dt_4^4}^{\N}(U)$. That is,
	BCN \eqref{eqn4_observability} is not observable in the sense of
	Definition \ref{def8_observability}.
\end{proof}

\begin{theorem}
	If BCN \eqref{BCN2} is observable in the sense of Definition
	\ref{def7_observability}, then it is also observable in the sense of
	Definition \ref{def3_observability}. The converse is not true.
	\label{thm8_observability}
\end{theorem}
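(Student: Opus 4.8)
The forward implication is immediate from the definitions, and I would dispose of it first. If the BCN is observable in the sense of Definition \ref{def7_observability}, then there is a single input sequence $U\in(\Dt_M)^{p}$ that simultaneously distinguishes every pair of distinct equal-output states. For a fixed initial state $x_0$, the very same pair $(p,U)$ separates $x_0$ from each $\bar x_0\ne x_0$ with $Hx_0=H\bar x_0$, which is exactly what Definition \ref{def3_observability} demands; hence the implication holds with one common $p$ and $U$ serving all states at once.

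For the converse I would exhibit an explicit BCN that is observable under Definition \ref{def3_observability} but not under Definition \ref{def7_observability}. The mechanism to engineer is three states sharing a single output that are \emph{individually} separable yet never \emph{jointly} separable: each state can be singled out by its own input sequence, while any input that peels one of them off to a distinguishing output forces the remaining two into a common successor state, after which they are permanently indistinguishable. A concrete candidate is $x(t+1)=\dt_4[1,1,1,3,3,4,3,2]x(t)u(t)$, $y(t)=\dt_2[1,2,2,2]x(t)$ with $x\in\Dt_4$ and $u,y\in\Dt$, in which $\dt_4^2,\dt_4^3,\dt_4^4$ all output $\dt_2^2$ while $\dt_4^1$ outputs $\dt_2^1$; input $\dt_2^2$ cyclically permutes $\dt_4^2,\dt_4^3,\dt_4^4$, and input $\dt_2^1$ sends whichever of them currently sits at $\dt_4^2$ to the marker $\dt_4^1$ while collapsing the other two onto $\dt_4^3$.

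To verify Definition \ref{def3_observability} I would use Theorem \ref{alg2_observability}: rotating the desired state into position $\dt_4^2$ with a (possibly empty) block of $\dt_2^2$'s and then applying $\dt_2^1$ yields, for each of $\dt_4^2,\dt_4^3,\dt_4^4$, an input sequence whose output trajectory singles it out, so each of the automata $A_{\dt_4^2},A_{\dt_4^3},A_{\dt_4^4}$ fails to be complete and the BCN is observable in this sense. To refute Definition \ref{def7_observability} I would invoke Theorem \ref{alg6_observability} and show that the automaton $A_{\V_n}$ built on all non-diagonal vertices is complete: starting from $\{(\dt_4^2,\dt_4^3),(\dt_4^2,\dt_4^4),(\dt_4^3,\dt_4^4)\}$, the letter $\dt_2^2$ permutes this set onto itself, while the letter $\dt_2^1$ drives $(\dt_4^3,\dt_4^4)$ to the diagonal vertex $(\dt_4^3,\dt_4^3)$, and from every diagonal vertex both letters are defined; tracing the finitely many reachable subset-states shows the transition function is total.

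The only genuine obstacle is this last step, since refuting Definition \ref{def7_observability} is a statement about \emph{all} input sequences at once. The completeness criterion of Theorem \ref{alg6_observability} is what makes it tractable: rather than quantifying over infinitely many words, I only need to follow the finitely many subset-states of $A_{\V_n}$ and check that no input ever kills a transition, which is guaranteed precisely by the permanent-collision design — once two of the three equal-output states merge onto a common successor, the corresponding diagonal vertex keeps every letter defined forever. I would also take care to confirm that the construction leaves every $A_{\dt_4^i}$ incomplete, so that Definition \ref{def3_observability} genuinely holds and the example does its job of separating the two notions.
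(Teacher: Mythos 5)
Your proposal is correct and follows essentially the same approach as the paper: the forward implication by the quantifier swap (one $U$ serving all pairs a fortiori serves each fixed $x_0$), and the converse by a four-state, single-input-node counterexample whose observability in the sense of Definition \ref{def3_observability} is certified through the incompleteness of each $A_{\dt_4^i}$ (Theorem \ref{alg2_observability}) and whose failure of Definition \ref{def7_observability} through the completeness of $A_{\V_n}$ (Theorem \ref{alg6_observability}). The only difference is the witness: the paper uses BCN \eqref{eqn3_observability} with $L=\dt_4[1,1,1,3,1,2,3,2]$, $H=\dt_2[1,1,2,2]$, whereas your rotation-plus-collapse construction $\dt_4[1,1,1,3,3,4,3,2]$, $\dt_2[1,2,2,2]$ also checks out --- $\dt_2^1\notin L(A_{\dt_4^2})$, $\dt_2^2\dt_2^2\dt_2^1\notin L(A_{\dt_4^3})$, $\dt_2^2\dt_2^1\notin L(A_{\dt_4^4})$, while $A_{\V_n}$ is complete because $\dt_2^2$ maps the initial state $\{(\dt_4^2,\dt_4^3),(\dt_4^2,\dt_4^4),(\dt_4^3,\dt_4^4)\}$ onto itself and $\dt_2^1$ sends it to the diagonal trap $\{(\dt_4^3,\dt_4^3)\}$, from which every letter remains defined.
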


\begin{proof}
	The first part holds naturally. To prove the second part,
	consider the following BCN:
	\begin{equation}
		\begin{split}
			x(t+1) &= \dt_4[1,1,1,3,1,2,3,2]x(t)u(t),\\
			y(t) &= \dt_2[1,1,2,2]x(t),
		\end{split}
		\label{eqn3_observability}
	\end{equation}
	where $t\in\N$, $x\in\Dt_4$, $y,u\in\Dt$.

	The weighted pair graph of BCN \eqref{eqn3_observability} is as shown
	in Fig. \ref{fig7:observability}.

\begin{figure}
        \centering
\begin{tikzpicture}[>=stealth',shorten >=1pt,auto,node distance=2.0 cm, scale = 1.0, transform shape,
	->,>=stealth,inner sep=2pt,state/.style={shape=circle,draw,top color=red!10,bottom color=blue!30},
	point/.style={circle,inner sep=0pt,minimum size=2pt,fill=},
	skip loop/.style={to path={-- ++(0,#1) -| (\tikztotarget)}}]
	\node[state] (11)                                 {$11$};
	\node[state] (22) [right of = 11]                    {$22$};
	\node[state] (12) [left of = 11]                 {$12$};
	\node[state] (34) [right of = 22]                 {$34$};
	\node[state] (33) [below of = 11]                 {$33$};
	\node[state] (44) [below of = 22]                 {$44$};

	\path [->] (11) edge [loop above] node {$1,2$} (11)
		   (22) edge node {$1$} (11)
		   (22) edge node {$2$} (33)
		   (12) edge node {$1$} (11)
		   (34) edge node {$2$} (22)
		   (44) edge node {$1$} (33)
		   (44) edge node {$2$} (22)
		   (33) edge node {$1$} (11)
		   (33) edge node {$2$} (22)
		   ;
        \end{tikzpicture}
	\caption{The weighted pair graph of BCN \eqref{eqn3_observability},
	where the number $ij$ in each circle denotes the state pair $(\dt_4^i,
	\dt_4^j)$, the weight $k_1,k_2,\dots$ beside each edge denotes the
	weight $\{\dt_2^{k_1},\dt_2^{k_2},\dots\}$ of the edge.}
	\label{fig7:observability}
\end{figure}

The DFA $A_{\{(\dt_4^1,\dt_4^2),(\dt_4^3,\dt_4^4)\}}$ generated by Algorithm
\ref{alg5_observability} (see Fig. \ref{fig8:observability}) is complete.
Then by Theorem \ref{thm5_observability}, BCN \eqref{eqn2_observability} is
not observable in the sense of Definition \ref{def7_observability}.

\begin{figure}
        \centering
\begin{tikzpicture}[>=stealth',shorten >=1pt,auto,node distance=2.0 cm, scale = 1.0, transform shape,
	->,>=stealth,inner sep=2pt,state/.style={shape=circle,draw,top color=red!10,bottom color=blue!30},
	point/.style={circle,inner sep=0pt,minimum size=2pt,fill=},
	skip loop/.style={to path={-- ++(0,#1) -| (\tikztotarget)}}]
	\node[accepting,initial,state] (2*)  {$12,34$};
	\node[accepting,state] [above right of = 2*] (11) {$11$};
	\node[accepting,state] [below right of = 2*] (22) {$22$};
	\node[accepting,state] [right of = 22] (33) {$33$};
	\path [->] (2*) edge node {$1$} (11)
		   (2*) edge node {$2$} (22)
		   (11) edge [loop above] node {$1,2$} (11)
		   (22) edge node {$1$} (11)
		   (22) edge node {$2$} (33)
		   (33) edge node {$2$} (22)
		   (33) edge node {$1$} (11)
	;
        \end{tikzpicture}
	\caption{The DFA $A_{\V_n 
	}$ with respect to BCN
	\eqref{eqn3_observability} generated by Algorithm
	\ref{alg5_observability},
	where the number $ij$ in each circle denotes the state pair $(\dt_4^i,
	\dt_4^j)$, the weight $k$ of each edge denotes the input $\dt_2^k$.
	}
	\label{fig8:observability}
\end{figure}

The DFAs $A_{\dt_4^1}$ and $A_{\dt_4^3}$ generated by Algorithm
\ref{alg1_observability} (see Fig. \ref{fig9:observability})
satisfy $\dt_2^2\notin L(A_{\dt_4^1})$ and
$\dt_2^1\notin L(A_{\dt_4^3})$. Then by Theorem \ref{thm3_observability},
BCN \eqref{eqn3_observability} is observable in the sense of Definition
\ref{def3_observability}.

\begin{figure}
        \centering
\begin{tikzpicture}[>=stealth',shorten >=1pt,auto,node distance=1.5 cm, scale = 0.7, transform shape,
	->,>=stealth,inner sep=2pt,state/.style={shape=circle,draw,top color=red!10,bottom color=blue!30},
	point/.style={circle,inner sep=0pt,minimum size=2pt,fill=},
	skip loop/.style={to path={-- ++(0,#1) -| (\tikztotarget)}}]

	\tikzstyle{time}=[inner sep=0pt,minimum size=5mm]

	\node[accepting,initial,state] (2*)  {$12$};
	\node[accepting,state] [above right of = 2*] (11) {$11$};

	\path [->] (2*) edge node {$1$} (11)
		   (11) edge [loop right] node {$1,2$} (11)
	;

	\node[time] [above right of = 2*] (*) {};
	\node[time] [right of = *] (**) {};
	\node[time] [right of = **] (***) {};

	\node[accepting,initial,state] [below right of = ***] (3*) {$34$};
	\node[accepting, state] [above right of = 3*] (22*) {$22$};
	\node[accepting, state] [right of = 22*] (11*) {$11$};
	\node[accepting, state] [below of = 11*] (33*) {$33$};

	\path [->] (3*) edge node {$2$} (22*)
		   (22*) edge node {$2$} (33*)
		   (33*) edge node {$2$} (22*)
		   (22*) edge node {$1$} (11*)
		   (33*) edge node {$1$} (11*)
		   (11*) edge [loop right] node {$1,2$} (11*)
	;

        \end{tikzpicture}
	\caption{The DFAs $A_{\dt_4^1}$ and $A_{\dt_4^3}$
	with respect to BCN
	\eqref{eqn3_observability} generated by Algorithm
	\ref{alg1_observability},
	where the number $ij$ in each circle denotes the state pair $(\dt_4^i,
	\dt_4^j)$, the weight $k$ beside each edge denotes the input $\dt_2^k$.
	}
	\label{fig9:observability}
\end{figure}

\end{proof}

\begin{theorem}
	If BCN \eqref{BCN2} is observable in the sense of Definition
	\ref{def8_observability}, then it is also observable in the sense of
	Definition \ref{def3_observability}. The converse is not true.
	\label{thm9_observability}
\end{theorem}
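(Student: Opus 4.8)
The plan is to split the claim into the forward implication and the failure of its converse, mirroring the structure of the preceding theorems in this section. For the forward part, that Definition \ref{def8_observability} implies Definition \ref{def3_observability}, the cleanest route is transitivity through two implications already established: Theorem \ref{thm10_observability} gives Definition \ref{def8_observability} $\Rightarrow$ Definition \ref{def7_observability}, and Theorem \ref{thm8_observability} gives Definition \ref{def7_observability} $\Rightarrow$ Definition \ref{def3_observability}, so chaining them yields the desired implication at once.

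Alternatively, I would argue directly from the definitions, which is equally short and self-contained. Assuming Definition \ref{def8_observability}, I would fix any initial state $x_0\in\Dt_N$ and any single infinite input $\tilde U\in(\Dt_M)^{\N}$. For every $\bar x_0\ne x_0$ with $Hx_0=H\bar x_0$, Definition \ref{def8_observability} forces $(HL)_{x_0}^{\N}(\tilde U)\ne(HL)_{\bar x_0}^{\N}(\tilde U)$, so the two output strings must already disagree at some finite position $p_{\bar x_0}$. Since $\Dt_N$ is finite, setting $p:=\max_{\bar x_0}p_{\bar x_0}$ is legitimate, and the length-$p$ prefix $\tilde U[0,p-1]$ then distinguishes $x_0$ from every such $\bar x_0$ in exactly the sense of Definition \ref{def3_observability}.

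For the converse I would reuse BCN \eqref{eqn4_observability}, whose relevant properties are already available from the proof of Theorem \ref{thm10_observability}: there it is shown that this BCN is observable in the sense of Definition \ref{def7_observability} but not in the sense of Definition \ref{def8_observability}. By Theorem \ref{thm8_observability}, observability under Definition \ref{def7_observability} entails observability under Definition \ref{def3_observability}, so BCN \eqref{eqn4_observability} is Definition \ref{def3_observability}-observable yet fails Definition \ref{def8_observability}-observability, giving the required separating example with no extra computation.

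There is no genuine obstacle here: both halves lean on results proved earlier in the section. The only point requiring a moment's care is verifying that the finite distinguishing length $p$ in the direct argument exists, which is immediate from the finiteness of the state set $\Dt_N$ and hence of the number of pairs to be separated. The reuse of BCN \eqref{eqn4_observability} is the natural choice precisely because its status under Definitions \ref{def7_observability} and \ref{def8_observability} has already been settled, and because Definition \ref{def7_observability}-observability is a strictly stronger property than Definition \ref{def3_observability}-observability, so the example transfers for free.
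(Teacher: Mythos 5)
Your proposal is correct, but the half dealing with the converse takes a genuinely different route from the paper. For the forward implication the paper simply says it ``holds naturally,'' which matches both of your arguments; your direct argument (fix $x_0$ and any infinite input $\tilde U$, extract a finite disagreement position $p_{\bar x_0}$ for each confusable $\bar x_0$, and take the maximum over the finitely many such states) is in fact the same finiteness device the paper itself uses to prove the first part of Theorem~\ref{thm10_observability}, so nothing is at risk there. The real divergence is in the counterexample: the paper reuses BCN~\eqref{eqn3_observability}, whose Definition~\ref{def3_observability}-observability was established in Theorem~\ref{thm8_observability}, and then rules out Definition~\ref{def8_observability}-observability by observing that the DFAs of Fig.~\ref{fig9:observability} coincide with those produced by Algorithm~\ref{alg3_observability} and contain cycles, invoking the cycle criterion of Theorem~\ref{thm6_observability}. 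You instead reuse BCN~\eqref{eqn4_observability} from the proof of Theorem~\ref{thm10_observability}, which is already known to satisfy Definition~\ref{def7_observability} but not Definition~\ref{def8_observability}, and transfer Definition~\ref{def7_observability}-observability down to Definition~\ref{def3_observability}-observability via the first part of Theorem~\ref{thm8_observability}. Since Theorems~\ref{thm10_observability} and~\ref{thm8_observability} precede this statement, there is no circularity, and your route requires no new graph or DFA computation at all; what the paper's choice buys in exchange is a second concrete separating example and a working illustration of the cycle criterion of Theorem~\ref{thm6_observability}, whereas yours is the more economical purely derivational argument.
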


\begin{proof}
	The first part holds naturally. To prove the second part,
	consider BCN \eqref{eqn3_observability} again.

	We have proved that BCN \eqref{eqn3_observability} is observable in the sense of
	Definition \ref{def3_observability} in Theorem
	\ref{thm8_observability}. Note that in Fig. \ref{fig9:observability},
	the DFAs are just the corresponding ones
	generated by Algorithm \ref{alg3_observability}. 
	Then By Theorem
	\ref{thm6_observability},
	BCN \eqref{eqn3_observability} is not
	observable in the sense of Definition \ref{def8_observability}.
\end{proof}

\section{Concluding remarks}\label{sec6}

In this paper, we solved the problem on determining the observability
of Boolean control networks (BCNs) completely
by using techniques in finite automata.
Also, we showed that no pairs of all the four types of observability notions are equivalent
by counterexamples, which
reveals the essence of nonlinearity of BCNs (shown in Fig. \ref{fig10:observability}).

\begin{figure}
        \centering
\begin{tikzpicture} [>=stealth',shorten >=1pt,auto,node distance=2.5 cm, scale = 1.0, transform shape,
	->,>=stealth,inner sep=2pt,state/.style={shape=circle,draw,top color=red!10,bottom color=blue!30},
	point/.style={circle,inner sep=0pt,minimum size=2pt,fill=},
	skip loop/.style={to path={-- ++(0,#1) -| (\tikztotarget)}}]
	\node[state] (7) {Def. \ref{def7_observability}};
	\node[state] [right of =7] (4) {Def. \ref{def3_observability}};
	\node[state] [below of =4] (5) {Def. \ref{def4_observability}};
	\node[state] [left of =5] (8) {Def. \ref{def8_observability}};
	\draw [->] ([yshift=2pt] 7.east) -- node {$+$} ([yshift=2pt] 4.west);
	\draw [->] ([yshift=-2pt] 4.west) -- node {$-$} ([yshift=-2pt] 7.east);
	\draw [->] ([xshift=2pt] 4.south) -- node {$+$} ([xshift=2pt] 5.north);
	\draw [->] ([xshift=-2pt] 5.north) -- node {$-$} ([xshift=-2pt] 4.south);
	\draw [->] ([yshift=2pt] 8.east) -- node {$+$} ([yshift=2pt] 5.west);
	\draw [->] ([yshift=-2pt] 5.west) -- node {$-$} ([yshift=-2pt] 8.east);
	\draw [->] (8) to [out=22.5, in=249.5] node {$+$}  (4);
	\draw [->] (4) to [out=204.5, in=67.5] node  {$-$} (8);
	\draw [->] ([xshift=2pt] 8.north) -- node {$-$} ([xshift=2pt] 7.south);
	\draw [->] ([xshift=-2pt] 7.south) -- node {$+$} ([xshift=-2pt] 8.north);
        \end{tikzpicture}
	\caption{The implication relationships between Definitions
		\ref{def3_observability}, \ref{def4_observability},
	\ref{def7_observability} and \ref{def8_observability},
where ``$+$'' means ``implies'' and ``$-$'' means ``does not imply''.
	}
	\label{fig10:observability}
\end{figure}

Note that the computational complexity of algorithms for determining the first and
fourth types of observability is in exponential time, and the algorithms for the other two types
are in doubly exponential time.
How to reduce the computational complexity effectively is
a challenging and urgent problem, and we are naturally concerned with
``Is there a nondeterministic polynomial time algorithm for determining
the observability of BCNs?'' Furthermore, we conjecture that
``Determining the observablity of BCNs is {\bf PSPACE}-hard.''


\section*{Acknowledgment}
The first author is in debt to Prof. Jarkko Kari, Drs. Charalampos Zinoviadis,
Ville Salo and Ilkka T\"{o}rm\"{a} at the
University of Turku, Finland for fruitful discussions while visiting
the same university in 2013.
Both authors thank  Mr. Chuang Xu
at the University of Alberta, Canada, the
anonymous referees and the associate editor  for their
valuable comments that highly improve the presentation of this paper.

%
%

\ifCLASSOPTIONcaptionsoff
  \newpage
\fi




\begin{thebibliography}{99}
















\bibitem{Kari2013LectureNoteonAFL}
J. Kari.
{\it A Lecture Note on Automata and Formal Languages}.
\url{http://users.utu.fi/jkari/automata/}, 2013.



\bibitem{Ideker(2001)}
     T. Ideker, T. Galitski, L. Hood (2001).
     A new approach to decoding life: systems biology,
     {\it Annu. Rev. Genomics Hum. Genet., 2,} 343--372.


  \bibitem{Kauffman1969RandomBN}
     S. A. Kauffman (1969).
     Metabolic stability and epigenesis in randomly constructed genetic nets,
     {\it J. Theoretical Biology, 22,}  437--467.





  \bibitem{Cheng2001STP}
     D. Cheng (2001).
     Semi-tensor product of matrices and its application to Morgen's problem,
     {\it Sci. China Ser. F, 44,}  195--212.

  \bibitem{Akutsu(2007)}
     T. Akutsu, M. Hayashida, W. Ching, M. K. Ng (2007).
     Control of Boolean networks: Hardness results and algorithms for tree structured networks,
     {\it J. Theoretical Biology, 244,}  670--679.





  \bibitem{Cheng2009bn_ControlObserva}
     D. Cheng, H. Qi (2009).
     Controllability and observability of Boolean control networks,
     {\it Automatica, 45(7),}  1659--1667.


  \bibitem{Zhao2010InputStateIncidenceMatrix}
     Y. Zhao, H. Qi, D. Cheng (2010).
     Input-state incidence matrix of Boolean control networks and its applications,
     {\it Systems Control Lett., 59,}  767--774.




  \bibitem{Cheng2011IdentificationBCN}
     D. Cheng, Y. Zhao (2011).
     Identification of Boolean control networks,
     {\it Automatica, 47,}  702--710.


  \bibitem{Cheng(2011book)}
     D. Cheng, H. Qi, Z. Li (2011).
     {\it Analysis and Control of Boolean Networks: A Semi-tensor Product
     Approach,} London: Springer.





   \bibitem{Zhang2013ControlObservaBCNTVD}
     L. Zhang, K. Zhang (2013).
     Controllability and observability of Boolean control networks with time-variant delays in states,
     {\it IEEE Trans. Neural Networks and Learning Systems, 24,}
      1478--1484.



\bibitem{Li2013ObservabilityConditionsofBCN}
R. Li, M. Yang, T. Chu (2013).
Observability conditions of Boolean control networks,
{\it Int. J. Robust. Nonlinear Control, 24,} 2711--2723.






   \bibitem{Laschov2013ObservabilityofBN:GraphApproach}
	   D. Laschov, M. Margaliot, G. Even (2013).
	   Observability of Boolean networks: A graph-theoretic approach,
	   {\it Automatica, 49,} 2351--2362.

   \bibitem{Zhang2015Invertibility:BCN}
K. Zhang, L. Zhang, L. Xie (2015).
Invertibility and nonsingularity of Boolean control networks,
{\it Automatica, 60,} 155--164.


   \bibitem{Fornasini2013ObservabilityReconstructibilityofBCN}
 E. Fornasini, M. Valcher    (2013).
 Observability, reconstructibility and state observers of Boolean control networks,
{\it IEEE Trans. Automat. Control, 58(6),}  1390--1401.

\bibitem{Xu2013ObserverFA_STP}
X. Xu, Y. Hong (2013).
Observability analysis and observer design for finite
automata via matrix approach,
{\it IET Control Theory and Applications, 7 (12)}, 1609--1615.


\bibitem{Li2015ControlObservaBCN}
R. Li, M. Yang, T. Chu (2015).
Controllability and observability of Boolean networks arising from biology,
{\it Chaos: An Interdisciplinary Journal of Nonlinear Science , 25}, 023104:1--15.













\end{thebibliography}
%

\end{document}